\newcommand{\eChar}{\begin{enumerate}[(i)]}
\newcommand{\eCharR}{\begin{enumerate}[(a)]}
\newcommand{\eBr}{\begin{enumerate}[(1)]}
\tikzstyle myBG=[line width=3pt,opacity=1.0]
\newcommand{\drawLinewithBG}[3]
{
	\draw[white,myBG]  (#1) -- (#2);
	\draw[#3,very thick] (#1) -- (#2);
}
\newcommand{\drawlabelnode}[3]
{
	\draw[fill] (#1) circle [radius=0.08];
	\node [#2] at (#1) {\Large #3};
}
\newcommand{\drawHEX}[1]
{
	\draw[black,thick,shift={(#1)}] (-3,0)--(-2,0)--(-2,1)--(-1,1)--(0,0)--(1,0)--(1,1)--(2,1)--(3,0)--(4,0)--(4,1)--(5,1)--(6,0)--(7,0);
	\draw[black,thick,shift={(#1)}] (-4,2)--(-3,2)--(-2,1)--(-1,1)--(-1,2)--(0,2)--(1,1)--(2,1)--(2,2)--(3,2)--(4,1)--(5,1)--(5,2)--(6,2);
}
\title
{Long scale Ollivier-Ricci curvature of graphs}
\author[1]{D. Cushing}
\author[1]{S. Kamtue}
\affil[1]{Department of Mathematical Sciences, Durham University}
\date{\today}
\theoremstyle{plain}
\newtheorem{lemma}{Lemma}[section]
\newtheorem{theorem}[lemma]{Theorem}
\newtheorem{proposition}[lemma]{Proposition}
\newtheorem{corollary}[lemma]{Corollary}
\theoremstyle{definition}
\newtheorem{remark}[lemma]{Remark}
\newtheorem{definition}{Definition}[section]
\newtheorem{example}[lemma]{Example}
\newtheorem{problem}[lemma]{Problem}
\numberwithin{equation}{section}
\begin{document}

\maketitle

\begin{abstract}
  We study the long scale Ollivier-Ricci curvature of graphs as a function of the chosen idleness. As in the previous work on the short scale, we show that this idleness function is concave and
  piecewise linear with at most $3$ linear parts. We provide bounds on the length of the first and last linear pieces. We also study the long scale curvature inside the Cartesian product
  of two regular graphs.
\end{abstract}

\section{Introduction and statement of results}

Ricci curvature is a fundamental notion in the study of Riemannian manifolds. This notion
has been generalised in various ways from the smooth setting of manifolds to more general
metric  spaces. For example, in \cite{Oll} Ollivier introduced a notion of Ricci curvature on metric spaces (later known as ``Ollivier-Ricci curvature''). This gives rise to a notion of Ricci curvature on graphs taking values on the edges and based on optimal transport of lazy random walks, with respect to an idleness parameter $p$. In \cite{LLY11} this notion was modified on graphs to give the ``Lin-Lu-Yau'' curvature. 

Beyond recent theoretical work on this notion (see \cite{BCLMP17, BM15, CKLLS2017, CLMP2018, LLY13, OllVil}), there have been several applications outside mathematics such as in biology (see \cite{Far, SGR, WM17}) and in computing (see \cite{Ni, WJBan, WJB} ).

In \cite{BCLMP17} the authors investigate the Ollivier-Ricci idleness function $p\mapsto \kappa_p(x,y)$, which takes the idleness parameter $p\in[0,1]$ as a variable and gives the value of curvature between the fixed two adjacent vertices $x$ and $y$ (or equivalently, the curvature given on an edge of the graph joining $x$ and $y$). They prove their main theorem that the idleness function $p\mapsto \kappa_p(x,y)$ is concave and piecewise linear over $[0,1]$ with at most 3 linear parts, and it is linear on the intervals $$\left[0,\frac{1}{\textup{lcm}(d_x,d_y)+1}\right] \textup{ and } \left[\frac{1}{\textup{max}(d_x,d_y)+1} ,1\right]$$  In this paper, we do similar investigation on the idleness function, but the condition that the two vertices are adjacent is replaced by distance $\ge 2$ apart (and henceforth called ``long scale curvature'' as in contrast to ``short scale curvature''). Our main result is that the (long scale) idleness function $p\mapsto \kappa_p(x,y)$ is concave and piecewise linear over $[0,1]$ with at most 3 linear parts, and it is linear on the intervals $$\left[0,\frac{1}{\textup{lcm}(d_x,d_y)+1}\right] \textup{ and } \left[\frac{1}{2}-\frac{1}{2}\cdot\frac{d_x+d_y}{2d_xd_y-d_x-d_y},1\right].$$
This main result is split into two theorems, which are stated and proved in Section \ref{sect: 3 linear} and \ref{sect: critical points}. In Section \ref{sect: example}, we provide an example of a graph that has exactly 3 linear parts and the first and the last linear parts are the same intervals as mentioned in the main result. In Section \ref{sect: cartesian}, we give the formula of the long scale curvature of Cartesian products of regular graphs. In Section \ref{sect:  behavior}, we present some interesting behaviours of long scale curvature, including the hexagonal tiling, discrete Bonnet-Myers' theorem, and one of the open problems suggested by Ollivier.
\\
\\
Throughout this article, let $G=(V,E)$ be a locally finite graph with
vertex set $V$, edge set $E$, and which contains no multiple edges or
self loops. Let $d_x$ denote the degree of the vertex $x\in V$ and 
$d(x,y)$ denote the length of the shortest path between two vertices
$x$ and $y$, that is, the combinatorial distance. We denote the existence of an edge between $x$ and $y$ by $x\sim y.$

We define the following probability measures $\mu_x$ for any
$x\in V,\: p\in[0,1]$:
$$\mu_x^p(z)=\begin{cases}p,&\text{if $z = x$,}\\
\frac{1-p}{d_x},&\text{if $z\sim x$,}\\
0,& \mbox{otherwise.}\end{cases}$$
Let $W_{1}$ denote the 1-Wasserstein distance between two probability measures on $V$, see \cite{Vill03} page 211.  The $ p-$Ollivier-Ricci curvature between $x$ and $y$ in $G=(V,E)$ is
$$\kappa_{ p}(x,y)=1-\frac{W_1(\mu^{ p}_x,\mu^{ p}_y)}{d(x,y)}.$$
Y. Lin, L. Lu, and S.T. Yau introduced in \cite{LLY11} the following
Ollivier-Ricci curvature:
$$\kappa_{LLY}(x,y) =\lim\limits_{ p\rightarrow 1}\frac{\kappa_{ p}(x,y)}{1- p}.$$
In particular, we call the curvature $\kappa_p(x,y)$ ``short scale'' when $x\sim y$, and ``long scale'' when $d(x,y)\geq 2$.


\section{Definitions and notation}\label{sect:defn-nots}
We now introduce the relevant definitions and notation we will need in this paper. First, we recall the Wasserstein distance and
the Ollivier-Ricci curvature.

\begin{definition}
Let $G = (V,E)$ be a locally finite graph. Let $\mu_{1},\mu_{2}$ be two probability measures on $V$. The {\it Wasserstein distance} $W_1(\mu_{1},\mu_{2})$ between $\mu_{1}$ and $\mu_{2}$ is defined as
\begin{equation} \label{eq:W1def}
W_1(\mu_{1},\mu_{2})=\inf_{\pi \in \Pi(\mu_1,\mu_2)} \sum_{y\in V}\sum_{x\in V} d(x,y)\pi(x,y),
\end{equation}
where 
\[
\Pi(\mu_1,\mu_2) = \left\{ \pi: V \times V \to [0,1] : \mu_{1}(x)=\sum_{y\in V}\pi(x,y), \;
\mu_{2}(y)=\sum_{x\in V}\pi(x,y) 
\right\}.
\]
\end{definition}

The transport plan $\pi$ moves a mass
distribution given by $\mu_1$ into a mass distribution given by
$\mu_2$, and $W_1(\mu_1,\mu_2)$ is a measure for the minimal effort
which is required for such a transition.
If $\pi$ attains the infimum in \eqref{eq:W1def} we call it an {\it
  optimal transport plan} transporting $\mu_{1}$ to $\mu_{2}$.

\begin{definition}
The $ p-$Ollivier-Ricci curvature of two vertices $x$ and $y$ in $G=(V,E)$ is
$$\kappa_{ p}(x,y)=1-\frac{W_1(\mu^{ p}_x,\mu^{ p}_y)}{d(x,y)} ,$$
where $p$ is called the {\it idleness}.
\end{definition}

A fundamental concept in optimal transport theory and vital to our work is Kantorovich duality. First we recall the notion
of 1--Lipschitz functions and then state the Kantorovich Duality Theorem.

\begin{definition}
Let $G=(V,E)$ be a locally finite graph, $\phi:V\rightarrow\mathbb{R}.$ We say that $\phi$ is $1$-Lipschitz if
$$|\phi(x) - \phi(y)| \leq d(x,y)$$
for all $x,y\in V.$ Let \textrm{1--Lip} denote the set of all $1$--Lipschitz functions on $V$.
\end{definition}

\begin{theorem}[Kantorovich duality \cite{Vill03}]\label{Kantorovich}
Let $G = (V,E)$ be a locally finite graph. Let $\mu_{1},\mu_{2}$ be two probability measures on $V$. Then
$$W_1(\mu_{1},\mu_{2})=\sup_{\substack{\phi:V\rightarrow \mathbb{R}\\ \phi\in \textrm{\rm{1}--{\rm Lip}}}}  \sum_{x\in V}\phi(x)(\mu_{1}(x)-\mu_{2}(x)).$$
If $\phi \in \textrm{\rm{1}--{\rm Lip}}$ attains the supremum we call it an \emph{optimal Kantorovich potential} transporting $\mu_{1}$ to $\mu_{2}$.
\end{theorem}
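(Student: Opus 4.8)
The plan is to prove this as an instance of linear programming (LP) duality, which is legitimate here because the measures that occur in this paper are finitely supported. Write $A=\operatorname{supp}\mu_1$ and $B=\operatorname{supp}\mu_2$ and assume both are finite (and, without loss, that $A\cup B$ lies in a single connected component, as otherwise both sides of the claimed identity are $+\infty$). Every $\pi\in\Pi(\mu_1,\mu_2)$ is then supported on $A\times B$, so $W_1(\mu_1,\mu_2)$ is the value of the finite LP: minimise $\sum_{x\in A,\,y\in B} d(x,y)\pi(x,y)$ over $\pi\geq 0$ subject to the marginal equalities $\sum_{y}\pi(x,y)=\mu_1(x)$ and $\sum_{x}\pi(x,y)=\mu_2(y)$. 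This LP is feasible (the product plan $\pi(x,y)=\mu_1(x)\mu_2(y)$ works) and bounded below by $0$, so an optimal plan exists and strong LP duality applies. The general case of arbitrary probability measures is the version proved in \cite{Vill03}, obtained from the finite one by a tightness/compactness argument; in this paper we only ever invoke the theorem for the finitely supported walk measures $\mu^p_x$.

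Next I would write down the dual program. Assigning a free multiplier $\phi(x)$ to each constraint $\sum_y\pi(x,y)=\mu_1(x)$ and a free multiplier $-\psi(y)$ to each constraint $\sum_x\pi(x,y)=\mu_2(y)$, the dual is: maximise $\sum_{x\in A}\phi(x)\mu_1(x)-\sum_{y\in B}\psi(y)\mu_2(y)$ subject to $\phi(x)-\psi(y)\leq d(x,y)$ for all $x\in A$, $y\in B$. Strong duality identifies this maximum with $W_1(\mu_1,\mu_2)$, so it remains to match it with $\sup\big\{\sum_{x}\phi(x)(\mu_1(x)-\mu_2(x)):\phi\in\textrm{1--Lip}\big\}$.

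The substantive step is to collapse an admissible dual pair $(\phi,\psi)$ into a single $1$-Lipschitz function without decreasing the objective, which I would carry out as follows. Given such a pair, define the inf-convolution $\tilde\phi(z):=\min_{y\in B}\big(\psi(y)+d(z,y)\big)$ for $z\in V$. Then: (i) $\tilde\phi$ is $1$-Lipschitz on $V$ for the graph metric, by the triangle inequality applied inside the minimum; (ii) $\tilde\phi(x)\geq\phi(x)$ for $x\in A$, directly from $\phi(x)\leq\psi(y)+d(x,y)$; and (iii) $\tilde\phi(y)\leq\psi(y)$ for $y\in B$, by keeping the term $y$ itself in the minimum. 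Since $\mu_1,\mu_2\geq 0$, combining (ii) and (iii) gives $\sum_x\tilde\phi(x)(\mu_1(x)-\mu_2(x))\geq\sum_{x}\phi(x)\mu_1(x)-\sum_{y}\psi(y)\mu_2(y)$, so the supremum over $1$-Lipschitz functions is at least the dual optimum. The reverse inequality is trivial: any $\phi\in\textrm{1--Lip}$ gives the admissible pair $(\phi,\phi)$. Hence the displayed identity holds. I expect this inf-convolution collapse to be the only genuinely nontrivial point — the LP-duality input is classical, and in the finitely supported setting there is no analytic subtlety (the real work only appears if one insists on the fully general statement).

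Finally, for the assertion about the optimal Kantorovich potential: the functional $\phi\mapsto\sum_x\phi(x)(\mu_1(x)-\mu_2(x))$ depends only on the finitely many values of $\phi$ on $A\cup B$ and is unchanged by adding a constant to $\phi$; normalising $\phi$ to vanish at a fixed vertex makes the feasible $1$-Lipschitz functions on $A\cup B$ a compact subset of a finite-dimensional space, so the supremum is attained, and an attaining $\phi$ extends to a global $1$-Lipschitz function via $z\mapsto\min_{w\in A\cup B}\big(\phi(w)+d(z,w)\big)$ (a McShane-type extension).
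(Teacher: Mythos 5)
The paper offers no proof of this statement at all: it is imported from Villani's book as a classical black box, so there is nothing internal to compare your argument against. Your LP-duality proof is correct for finitely supported measures, which is the only case the paper ever invokes (the walk measures $\mu_x^p$ on a locally finite graph have finite support). The primal/dual set-up is right, the inf-convolution collapse $\tilde\phi(z):=\min_{y\in B}\bigl(\psi(y)+d(z,y)\bigr)$ with your properties (i)--(iii) does exactly the job of turning an optimal dual pair into a single $1$-Lipschitz potential without decreasing the objective, and the reverse inequality via the pair $(\phi,\phi)$ is trivial as you say. Two remarks. First, your dichotomy ``either $A\cup B$ lies in one component or both sides are $+\infty$'' is slightly too quick: if $\mu_1$ and $\mu_2$ assign equal mass to each component, both sides are finite even when several components are involved; the fix is simply to run the argument componentwise and sum (and the paper only ever uses pairs at finite distance, so this never bites). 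Second, your attainment argument (normalise on $A\cup B$, compactness, then the McShane extension $z\mapsto\min_{w\in A\cup B}\bigl(\phi(w)+d(z,w)\bigr)$) is a genuinely useful supplement: the theorem as stated only names an optimal Kantorovich potential conditionally (``if $\phi$ attains the supremum''), yet the proofs of Proposition~\ref{prop:int_valuedness} and Lemma~\ref{lemma:d-2} both start by taking such a $\phi^*$ for granted, and your step is precisely what licenses that. For measures of infinite support the full statement does require the analytic argument in Villani, which you correctly defer to rather than reprove.
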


Most of the time, it is sufficient to consider 1-Lipschitz functions that only yield integer values. This observation is an important tool to deal with Kantorovich potential as in the following way.

\begin{definition}
Let $G=(V,E)$ be a locally finite graph and let $\phi:V\rightarrow\mathbb{R}$. Define two functions $\lfloor\phi\rfloor, \lceil\phi\rceil: V\rightarrow\mathbb{Z}$ to be $\lfloor\phi\rfloor(v):= \lfloor \phi(v) \rceil$ and $\lceil\phi\rceil(v):= \lceil \phi(v) \rceil$.
\end{definition}

\begin{lemma}\textup{(\cite[Lemma 3.2]{BCLMP17})}
\label{lemma:floorlip}
Let $G=(V,E)$ be a locally finite graph. Suppose that a function $\phi:V\rightarrow \mathbb{R}$ is 1-Lipschitz. Then the functions $\lfloor\phi\rfloor$ and $\lceil\phi\rceil$ are also 1-Lipschitz.
\end{lemma}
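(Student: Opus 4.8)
The plan is to reduce the whole statement to the single elementary fact that the combinatorial distance $d$ on a graph takes values in $\IN\cup\{0\}$, so that adding such a distance to a real number commutes with taking floors and ceilings. Fix $x,y\in V$ and set $D:=d(x,y)\in\IN\cup\{0\}$. We must check $|\lfloor\phi\rfloor(x)-\lfloor\phi\rfloor(y)|\le D$ and the analogous inequality for $\lceil\phi\rceil$; since swapping $x$ and $y$ changes neither side, we may assume $\phi(x)\ge\phi(y)$.

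For the floor function I would argue as follows. Monotonicity of $t\mapsto\lfloor t\rfloor$ together with $\phi(x)\ge\phi(y)$ gives $\lfloor\phi(x)\rfloor\ge\lfloor\phi(y)\rfloor$, so the difference we must bound is nonnegative. On the other hand, $1$-Lipschitzness of $\phi$ gives $\phi(x)\le\phi(y)+D$, and since $D\in\IZ$ we have $\lfloor\phi(y)+D\rfloor=\lfloor\phi(y)\rfloor+D$; applying monotonicity of the floor once more yields $\lfloor\phi(x)\rfloor\le\lfloor\phi(y)\rfloor+D$. Combining the two inequalities gives $0\le\lfloor\phi\rfloor(x)-\lfloor\phi\rfloor(y)\le D=d(x,y)$, as required.

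For the ceiling function the cleanest route is to observe the pointwise identity $\lceil\psi\rceil=-\lfloor-\psi\rfloor$ and that $-\phi$ is $1$-Lipschitz whenever $\phi$ is; the ceiling statement is then immediate from the floor statement applied to $-\phi$. (Alternatively one repeats the two-line argument above verbatim, using $\lceil t+D\rceil=\lceil t\rceil+D$ for $D\in\IZ$ in place of the corresponding identity for the floor.)

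There is no genuine obstacle here; the only point requiring a little care is that the argument really does need $D=d(x,y)\in\IZ$. One should resist the temptation to bound $|\lfloor a\rfloor-\lfloor b\rfloor|$ by $|a-b|$, since that inequality is false in general (e.g.\ $a=1$, $b=1-\eps$ gives $|\lfloor a\rfloor-\lfloor b\rfloor|=1>\eps$); invoking integrality of $D$ at the single key step is exactly what makes the estimate go through, and it is also the reason the statement is specific to graph metrics rather than arbitrary metric spaces.
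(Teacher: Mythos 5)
Your proof is correct. Note that the paper itself states this lemma without proof, importing it from \cite[Lemma 3.2]{BCLMP17}; your argument (reduce to the floor case via $\lceil\psi\rceil=-\lfloor-\psi\rfloor$, then use monotonicity of the floor together with $\lfloor t+D\rfloor=\lfloor t\rfloor+D$ for the integer $D=d(x,y)$) is exactly the standard one, and you correctly isolate the integrality of the graph distance as the point where the argument genuinely uses the discrete setting.
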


The existence of integer-valued optimal Kantorovich potentials can be formulated as in the following proposition, which generalises the result from \cite[Lemma 3.3]{BCLMP17}.

\begin{proposition}[Integer-Valuedness] \label{prop:int_valuedness}
Let $G=(V,E)$ be a locally finite graph. Let $\mu_1,\mu_2$ be two probability measures on $V$. Then there exists an integer-valued optimal Kantorovich potential $\phi:V\rightarrow \mathbb{Z}$ transporting $\mu_1$ to $\mu_2$, that is
\begin{align} \label{eqn:OKP}
W_1(\mu_{1},\mu_{2})=  \sum_{x\in V}\phi(x)(\mu_{1}(x)-\mu_{2}(x)).
\end{align}
\end{proposition}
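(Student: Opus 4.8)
The plan is to start from an arbitrary optimal Kantorovich potential $\psi$ (which exists by Theorem \ref{Kantorovich}, since $V$ is at worst countable and the measures have finite support, so a maximiser of a linear functional over the compact-in-the-relevant-coordinates set of 1-Lipschitz functions is attained) and to discretise it without decreasing the objective. The natural candidate is to round $\psi$ to one of $\lfloor\psi\rfloor$ or $\lceil\psi\rceil$; by Lemma \ref{lemma:floorlip} both are 1-Lipschitz and integer-valued, so whichever of the two has the larger value of $\sum_{x\in V}\phi(x)(\mu_1(x)-\mu_2(x))$ is a legitimate competitor in the Kantorovich supremum, hence has value $\le W_1(\mu_1,\mu_2)$; combined with the fact that $\psi$ already attains $W_1(\mu_1,\mu_2)$, it suffices to show that one of the two roundings does \emph{not} decrease the objective.

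To see this, write $f(x):=\psi(x)-\lfloor\psi(x)\rfloor\in[0,1)$ for the fractional part, so that $\lfloor\psi\rfloor=\psi-f$ and $\lceil\psi\rceil=\psi-f+\mathbbm{1}_{\{f>0\}}$ (on the set where $\psi$ is already an integer the two roundings agree and we simply keep $\psi$ there). Then
\begin{align*}
\sum_{x}\lfloor\psi\rfloor(x)(\mu_1(x)-\mu_2(x)) &= W_1(\mu_1,\mu_2) - \sum_{x} f(x)(\mu_1(x)-\mu_2(x)),\\
\sum_{x}\lceil\psi\rceil(x)(\mu_1(x)-\mu_2(x)) &= W_1(\mu_1,\mu_2) - \sum_{x} \bigl(f(x)-\mathbbm{1}_{\{f(x)>0\}}\bigr)(\mu_1(x)-\mu_2(x)).
\end{align*}
If the correction term $S:=\sum_x f(x)(\mu_1(x)-\mu_2(x))$ is $\le 0$ we take $\lfloor\psi\rfloor$ and are done; otherwise $S>0$, and then I would argue that $\lceil\psi\rceil$ works. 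One clean way: parametrise by a threshold $t\in[0,1)$ and set $\phi_t(x):=\lfloor\psi(x)\rfloor$ if $f(x)\le t$ and $\lceil\psi(x)\rceil$ if $f(x)>t$; each $\phi_t$ is integer-valued and 1-Lipschitz (it is obtained from $\psi$ by subtracting $f$ and then adding back $\mathbbm{1}_{\{f>t\}}$, and one checks the Lipschitz bound directly, or invokes Lemma \ref{lemma:floorlip} applied to the 1-Lipschitz function $\psi-t$), and the map $t\mapsto \sum_x \phi_t(x)(\mu_1(x)-\mu_2(x))$ is piecewise constant with finitely many values. Its average over $t\in[0,1)$ equals $\sum_x \psi(x)(\mu_1(x)-\mu_2(x)) - \sum_x (\text{something})$; more simply, $\phi_1=\lfloor\psi\rfloor$ and $\phi_0=\lceil\psi\rceil$ (up to the measure-zero set $\{f=0\}$, which contributes nothing), so the objective at $t=0$ and $t=1$ are the two displayed quantities above, and since their average — or a suitable convex combination matching the integral over $t$ — is at least one of them, at least one of $\lfloor\psi\rfloor,\lceil\psi\rceil$ attains value $\ge W_1(\mu_1,\mu_2)$. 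Combined with the upper bound from Kantorovich duality, that rounding is an integer-valued optimal Kantorovich potential, proving \eqref{eqn:OKP}.

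The main obstacle I anticipate is purely the bookkeeping on the set where $\psi$ takes integer values and the (minor) point that $V$ may be infinite: one must make sure the sums $\sum_x f(x)(\mu_1(x)-\mu_2(x))$ converge absolutely, which they do because $\mu_1,\mu_2$ are probability measures and $|f|<1$, and one must confirm that $\phi_t$ is genuinely 1-Lipschitz rather than just "morally" so — this is exactly where Lemma \ref{lemma:floorlip} is used, applied not to $\psi$ itself but to the shifted function, so that the rounding direction is uniform. Everything else is a one-line averaging argument.
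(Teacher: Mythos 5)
Your proposal is correct in substance but follows a genuinely different route from the paper. The paper's proof works on the primal side as well as the dual: it takes an optimal transport plan $\pi$, invokes complementary slackness to conclude that $|\phi^*(v)-\phi^*(w)|=d(v,w)$ whenever $\pi(v,w)>0$, deduces that the fractional part $\delta_v$ of $\phi^*$ is constant on each connected component of the support graph of $\pi$, and then uses mass balance on each component to show the correction term $\sum_v \delta_v(\mu_1(v)-\mu_2(v))$ vanishes outright — so $\lfloor\phi^*\rfloor$ itself is optimal. You instead use a threshold-rounding (randomised rounding) argument entirely on the dual side: the family $\phi_t=\lceil\psi-t\rceil$ is integer-valued and $1$-Lipschitz by Lemma \ref{lemma:floorlip} applied to the shifted function, $\int_0^1\phi_t(x)\,dt=\psi(x)$ pointwise, so by Fubini the objective $g(t)$ of $\phi_t$ integrates to $W_1(\mu_1,\mu_2)$ over $t\in[0,1)$; since $g(t)\le W_1(\mu_1,\mu_2)$ for every $t$ by Kantorovich duality, almost every $t$ gives an optimal integer-valued potential. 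This buys you independence from the complementary slackness theorem and from the existence of an optimal plan — you only need an optimal potential and Lemma \ref{lemma:floorlip} — at the cost of losing the explicit identification of \emph{which} rounding works. Two small points of looseness you should repair: the claim that $t\mapsto g(t)$ is piecewise constant with finitely many values holds only when $\mu_1-\mu_2$ has finite support (true for the measures $\mu_x^p$ used later, but not for the general measures in the statement), and the phrase about "their average is at least one of them" is not the inequality you need to single out $\lfloor\psi\rfloor$ or $\lceil\psi\rceil$; the clean conclusion is simply that $g\le W_1$ everywhere with $\int_0^1 g=W_1$ forces $g=W_1$ almost everywhere, and any such $t$ finishes the proof (if you want the endpoints specifically, note $g$ is right-continuous with $g(1^-)$ equal to the objective of $\lfloor\psi\rfloor$, so in fact both roundings are optimal). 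Neither issue is a genuine gap.
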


\begin{proof}
Let a 1-Lipschitz function $\phi^*:V\rightarrow \mathbb{R}$ be an optimal Kantorovich potential transporting $\mu_1$ to $\mu_2$, that is
$$W_1(\mu_{1},\mu_{2})=  \sum_{x\in V}\phi^*(x)(\mu_{1}(x)-\mu_{2}(x)).$$
By Proposition \ref{lemma:floorlip}, the function $\lfloor \phi^* \rfloor \in \textup{1-Lip}$. We will show that $\phi=\lfloor \phi^* \rfloor$ satisfies \eqref{eqn:OKP}, and it is therefore an integer-valued optimal Kantorovich potential.

Let $\pi$ be an optimal transport plan transporting $\mu_1$ to $\mu_2$. Construct a graph $H$ with vertices $V$ and edges given by its adjacency matrix $A_H$:
$$A_H(v,w)=\begin{cases}
1 &\textup{ if }\pi(v,w)>0 \textup{ or } \pi(w,v)>0 \\
0 &\textup{ otherwise.}
\end{cases}$$

For each $v\in V$, define $\delta_v:=\phi^*(v)-\lfloor\phi^*(v)\rfloor \in [0,1)$ the fractional part of $\phi^*(v)$.

For any $v,w\in V$ such that $v\stackrel{H}{\sim} w$ (that is $\pi(v,w)>0$ or $\pi(w,v)>0$), complementary slackness theorem gives $|\phi^*(v)-\phi^*(w)|=d(v,w)$. Therefore,
\begin{align*}
|\lfloor\phi^*\rfloor(v)-\lfloor\phi^*\rfloor(w)|
=|\phi^*(v)-\delta_v-\phi^*(w)+\delta_w|
=|\pm d(v,w)-(\delta_v-\delta_w)|.
\end{align*}
which implies that $\delta_v-\delta_w$ has an integer value. Since $\delta_v-\delta_w\in (-1,1)$, so it must be $0$. In conclusion, $\delta_v=\delta_w$ for all $v\stackrel{H}{\sim} w$. By transitivity, $\delta_v=\delta_w$ for all $v,w$ in the same connected component of $H$.

Now let $(W_i)_{i=1}^n$ denote the connected components of $H$. For each $i$, set $\delta_i:=\delta_u$ for any $u\in W_i$. Note also that $\sum\limits_{v\in W_i}\mu_1(v)=\sum\limits_{v\in W_i}\mu_2(v)$ for all $i$, since no mass is transported between different connected components of $H$. Therefore,
\begin{align*}
\sum_{v\in V}\lfloor\phi^*\rfloor(v)(\mu_{1}(v)-\mu_{2}(v))
&=\sum_{v\in V}\phi^*(v)(\mu_{1}(v)-\mu_{2}(v))-
\sum\limits_{v\in V} \delta_v(\mu_{1}(v)-\mu_{2}(v)) \\
&=W_1(\mu_1,\mu_2)-
\sum\limits_{i=1}^{n} \sum\limits_{v\in W_1} \delta_v(\mu_{1}(v)-\mu_{2}(v))\\
&=W_1(\mu_1,\mu_2)-
\sum\limits_{i=1}^{n} \delta_i \Big(\sum\limits_{v\in W_1}(\mu_{1}(v)-\mu_{2}(v))\Big)\\
&=W_1(\mu_1,\mu_2)
\end{align*}
yielding the equation \eqref{eqn:OKP} as desired.
\end{proof}

\section{The idleness function is 3-piece linear}\label{sect: 3 linear}

In this section we will prove that, for any $x,y\in V$ such that $d(x,y)\ge 2$, the idleness function $p\mapsto \kappa_p(x,y)$ is piecewise linear with at most 3 linear parts. The proof follows the method from Theorem 5.2 in \cite{BCLMP17}, which prove in case $x\sim y$. The difference in the long-scale case lies in the following lemma. 

\begin{lemma} \label{lemma:d-2}
Let $G=(V,E)$ be a locally finite graph, and let $x,y\in V$ with $d(x,y)=\delta\ge 2$. Given $p\in (0,1]$, every optimal Kantorovich potential $\phi^*:V\rightarrow \mathbb{R}$ transporting $\mu_x^p$ to $\mu_y^p$ satisfies $$\delta-2 \le \phi^*(x)-\phi^*(y) \le \delta.$$ Moreover, if $p>\frac{1}{2}$, then $$\phi^*(x)-\phi^*(y)=\delta$$
\end{lemma}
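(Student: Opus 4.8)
The plan is to analyze the Kantorovich duality expression for $W_1(\mu_x^p, \mu_y^p)$ directly, exploiting that an optimal potential $\phi^*$ must maximize $\sum_z \phi^*(z)(\mu_x^p(z) - \mu_y^p(z))$ over all 1-Lipschitz functions. The upper bound $\phi^*(x) - \phi^*(y) \le \delta$ is immediate from the 1-Lipschitz condition applied to the pair $x, y$ since $d(x,y) = \delta$. So the real content is the lower bound $\phi^*(x) - \phi^*(y) \ge \delta - 2$, and the improvement to equality when $p > \tfrac12$.

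**First I would** set up the contradiction for the lower bound. Suppose $\phi^*(x) - \phi^*(y) \le \delta - 3$ (or more carefully, $< \delta - 2$; since we may assume $\phi^*$ is integer-valued by Proposition \ref{prop:int_valuedness}, this means $\le \delta - 3$). The idea is to modify $\phi^*$ by raising its value at $x$ — define $\tilde\phi$ to agree with $\phi^*$ everywhere except $\tilde\phi(x) = \phi^*(x) + 1$. I must check $\tilde\phi$ is still 1-Lipschitz: the only constraints that could break involve $x$ and its neighbors $z \sim x$, but $d(x,z) = 1$ and $\phi^*(z) \le \phi^*(x) + 1$ already (from 1-Lipschitzness of $\phi^*$), so after raising, $\tilde\phi(x) - \phi^*(z) \le \phi^*(x) + 1 - (\phi^*(x) - 1) = 2$... this does not immediately work for a single neighbor. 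The correct fix: raise $\phi^*$ by $1$ on the whole set $B = \{z : d(x,z) \le k\}$ for a suitable $k$, or more cleverly, replace $\phi^*$ by $\min(\phi^*, \phi^*(x) + d(x, \cdot))$-type truncations. Since the mass $\mu_x^p$ lives only on $x$ and $N(x)$, and $\mu_y^p$ only on $y$ and $N(y)$, and these neighborhoods are at distance $\ge \delta - 2 \ge 0$ apart, I would truncate: replace $\phi^*$ with $\psi(z) := \min\{\phi^*(z), c + d(y,z)\}$ where $c = \phi^*(y)$. This $\psi$ is 1-Lipschitz (min of 1-Lipschitz functions), agrees with $\phi^*$ at $y$, and satisfies $\psi(z) \le c + d(y,z)$ everywhere; in particular $\psi(x) \le c + \delta$. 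But also $\psi(z) \ge \phi^*(z)$ would be needed to not decrease the objective on the $\mu_x^p$ side — that is false. So instead I would work symmetrically on both ends and compare the gain on the $x$-side against the possible loss on the $y$-side, using that $\mu_x^p(x) - \mu_y^p(x) = p - 0 = p > 0$ when $d(x,y) \ge 2$ (so $x \notin N(y)$), which is where the assumption $p > 0$ enters, and that the total mass on each side is $1$.

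**The hard part will be** making the perturbation argument quantitatively tight enough to get exactly $\delta - 2$ rather than a weaker bound, and to get the sharp threshold $p > \tfrac12$. For the $p > \tfrac12$ claim I expect the following: write the dual objective as $p\phi^*(x) - p\phi^*(y) + \tfrac{1-p}{d_x}\sum_{z\sim x}\phi^*(z) - \tfrac{1-p}{d_y}\sum_{w\sim y}\phi^*(w)$. Each neighbor $z \sim x$ has $\phi^*(z) \ge \phi^*(x) - 1$, and each $w \sim y$ has $\phi^*(w) \le \phi^*(y) + 1$, so the objective is at least $p(\phi^*(x) - \phi^*(y)) + (1-p)(\phi^*(x) - 1) - (1-p)(\phi^*(y) + 1) = (\phi^*(x) - \phi^*(y)) - 2(1-p)$. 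On the other hand, taking the specific competitor $\phi(z) = d(y,z)$ gives objective value $p\delta + \tfrac{1-p}{d_x}\sum_{z\sim x} d(y,z) - (1-p) \ge p\delta + (1-p)(\delta - 1) - (1-p) = \delta - 2(1-p)$, using $d(y,z) \ge \delta - 1$. Hence $\delta - 2(1-p) \le W_1 \le (\phi^*(x) - \phi^*(y)) - 2(1-p) \cdot[\text{sign issues}]$... I need to combine these so that $W_1 \ge \delta - 2(1-p)$ forces, together with $W_1 = $ (dual value of $\phi^*$) $\le \phi^*(x) - \phi^*(y)$ (crude bound from Lipschitz on all terms), that when $p > \tfrac12$ we get $\phi^*(x) - \phi^*(y) \ge \delta - 2(1-p) > \delta - 1$, and since this difference is an integer (after reducing to integer-valued $\phi^*$), it must equal $\delta$. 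I would carry this out by first invoking Proposition \ref{prop:int_valuedness} to assume $\phi^*$ integer-valued, then establishing the lower bound $W_1(\mu_x^p,\mu_y^p) \ge \delta - 2(1-p)$ via the explicit potential $d(y,\cdot)$, then bounding the dual value of $\phi^*$ from above in terms of $\phi^*(x) - \phi^*(y)$ using neighbor-Lipschitz inequalities, and finally combining. The delicate point is handling the neighbor sums carefully — some neighbors of $x$ might be closer to $y$ than others — so the bound $d(y,z) \ge \delta - 1$ and $\phi^*(z) \ge \phi^*(x) - 1$ must be applied uniformly, which is valid, giving the clean inequalities above.
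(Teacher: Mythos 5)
Your upper bound $\phi^*(x)-\phi^*(y)\le\delta$ is fine, but the lower bound and the $p>\tfrac12$ claim do not go through as outlined. The final quantitative argument hinges on the ``crude bound'' $W_1(\mu_x^p,\mu_y^p)\le\phi^*(x)-\phi^*(y)$, which you yourself flag with ``[sign issues]'' --- and it is in fact false in general: on the infinite $d$-regular tree with $d\ge 3$ one has $W_1(\mu_x^p,\mu_y^p)>\delta\ge\phi^*(x)-\phi^*(y)$ (any graph with negative long-scale curvature gives a counterexample). The correct version of your upper estimate, obtained from $\phi^*(z)\le\phi^*(x)+1$ for $z\sim x$ and $\phi^*(w)\ge\phi^*(y)-1$ for $w\sim y$, is only $W_1\le\bigl(\phi^*(x)-\phi^*(y)\bigr)+2(1-p)$. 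Combined with your (correct) lower bound $W_1\ge\delta-2(1-p)$ from the competitor $d(y,\cdot)$, this yields $\phi^*(x)-\phi^*(y)\ge\delta-4(1-p)$, which is weaker than $\delta-2$ for every $p<\tfrac12$ and forces equality with $\delta$ only for $p>\tfrac34$, not $p>\tfrac12$. Your earlier perturbation and truncation attempts are abandoned before they yield anything, so no complete route remains. A secondary issue: reducing to integer-valued potentials via Proposition \ref{prop:int_valuedness} would only establish the claim for \emph{some} optimal potential, whereas the lemma asserts it for \emph{every} optimal potential $\phi^*:V\to\mathbb{R}$.

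The missing idea is complementary slackness between an optimal plan $\pi^*$ and an optimal potential $\phi^*$: whenever $\pi^*(u,v)>0$ one has $\phi^*(u)-\phi^*(v)=d(u,v)$. The measure $\mu_y^p$ places mass $p>0$ at $y$, and all of it must arrive from the support of $\mu_x^p$, namely $\{x\}\cup N(x)$; hence some $u$ in that set satisfies $\pi^*(u,y)>0$, so $\phi^*(u)-\phi^*(y)=d(u,y)\ge\delta-1$, and the Lipschitz bound $\phi^*(x)\ge\phi^*(u)-1$ gives $\phi^*(x)-\phi^*(y)\ge\delta-2$ with no integrality needed. When $p>\tfrac12$, the neighbours of $x$ carry total mass only $1-p<p$, so the mass at $y$ cannot all come from $N(x)$; thus $\pi^*(x,y)>0$ and complementary slackness gives $\phi^*(x)-\phi^*(y)=d(x,y)=\delta$ exactly. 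This is the paper's argument, and it is both shorter and sharp at the threshold $p=\tfrac12$.
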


\begin{proof}
Let $\pi^*\in\prod(\mu_x^p,\mu_y^p)$ and $\phi^*:V\rightarrow \mathbb{R}$ be an optimal plan and an optimal Kantorovich potential with respect to $W_1(\mu_x^p,\mu_y^p)$. For convenience, label the neighbours of $x$ by $x_1,...,x_{d_x}$ (where $d_x=\deg(x)$) and also label $x=x_0$.

Since $\sum\limits_{i=0}^{d_x} \pi^*(x_i,y)=\mu_y^p(y)=p>0$, we have $\pi^*(x_k,y)>0$ for some $0\le k \le d_x$. By complementary slackness theorem, 
$\phi^*(x_k)-\phi^*(y)=d(x_k,y)$. By Lipschitz and metric properties, we then have
$$\delta\ge \phi^*(x)-\phi^*(y)\ge (\phi^*(x_k)-1)-\phi^*(y) =d(x_k,y)-1\ge d(x,y)-2=\delta-2.$$
Now, assume $p>\frac{1}{2}$. Note that $$\sum\limits_{i=0}^{d_x} \pi^*(x_i,y)=\mu_y^p(y)=p>1-p=\sum\limits_{i=i}^{d_x} \mu_x^p(x_i)\ge \sum\limits_{i=i}^{d_x} \pi^*(x_i,y).$$ Hence, $\pi^*(x_0,y)>0$, on which the complementary slackness theorem implies that $$\phi^*(x)-\phi^*(y)=d(x,y)=\delta$$
\end{proof}

\begin{theorem} \label{thm:3linear}
Let $G=(V,E)$ be a locally finite graph, and let $x,y\in V$ with $d(x,y)=\delta\ge 2$. Then $p\mapsto \kappa_p(x,y)$ is concave and piecewise linear over $[0,1]$ with at most 3 linear parts.
\end{theorem}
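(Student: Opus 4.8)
The plan is to follow the scheme of \cite[Theorem 5.2]{BCLMP17}, feeding in Lemma~\ref{lemma:d-2} in place of its short-scale counterpart. Fix $x,y$ with $d(x,y)=\delta\ge 2$. For a function $\phi\colon V\to\mathbb R$ write
$$f_\phi(p):=\sum_{z\in V}\phi(z)\bigl(\mu_x^p(z)-\mu_y^p(z)\bigr)=p\bigl(\phi(x)-\phi(y)\bigr)+(1-p)\,c_\phi,\qquad c_\phi:=\frac1{d_x}\sum_{z\sim x}\phi(z)-\frac1{d_y}\sum_{z\sim y}\phi(z),$$
so that $p\mapsto f_\phi(p)$ is affine. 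By Kantorovich duality (Theorem~\ref{Kantorovich}) together with Proposition~\ref{prop:int_valuedness}, for every $p\in[0,1]$
$$W_1(\mu_x^p,\mu_y^p)=\max\bigl\{f_\phi(p):\phi\text{ is }1\text{-Lipschitz and }\mathbb Z\text{-valued}\bigr\},$$
the maximum being attained; in particular $f_\phi(p)\le W_1(\mu_x^p,\mu_y^p)$ for every such $\phi$ and every $p$.

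First I would partition the $\mathbb Z$-valued $1$-Lipschitz potentials according to the integer $m:=\phi(x)-\phi(y)$. Since $\phi$ is $1$-Lipschitz and $G$ is locally finite, each neighbour $z$ of $x$ has $\phi(z)\in[\phi(x)-1,\phi(x)+1]$, whence $\frac1{d_x}\sum_{z\sim x}\phi(z)\in[\phi(x)-1,\phi(x)+1]$, and likewise at $y$; thus $c_\phi\in[m-2,m+2]$. Hence, for each fixed $m$, the number
$$C_m:=\sup\bigl\{c_\phi:\phi\text{ is }1\text{-Lipschitz},\ \mathbb Z\text{-valued},\ \phi(x)-\phi(y)=m\bigr\}$$
is finite (the set being nonempty when $0\le m\le\delta$, since the function $z\mapsto\min(d(z,y),m)$ qualifies). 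Because the coefficient of $p$ in $f_\phi$ depends on $\phi$ only through $m$, for $p\in[0,1)$ one has
$$g_m(p):=\sup\bigl\{f_\phi(p):\phi\text{ as above with }\phi(x)-\phi(y)=m\bigr\}=pm+(1-p)C_m,$$
and this identity also holds at $p=1$ (both sides equal $m$); so each $g_m$ is an affine function on $[0,1]$, and $g_m(p)\le W_1(\mu_x^p,\mu_y^p)$ for all $p$.

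The crucial step is Lemma~\ref{lemma:d-2}: for $p\in(0,1]$, the $\mathbb Z$-valued optimal potential supplied by Proposition~\ref{prop:int_valuedness} has $\phi(x)-\phi(y)\in\{\delta-2,\delta-1,\delta\}$. Consequently, for $p\in(0,1]$,
$$W_1(\mu_x^p,\mu_y^p)=\max\bigl(g_{\delta-2}(p),\,g_{\delta-1}(p),\,g_{\delta}(p)\bigr),$$
the inequality $\le$ holding because that optimal potential lies in one of the three classes, and $\ge$ because each $g_m\le W_1$. The right-hand side is the upper envelope of at most three affine functions, hence convex and piecewise linear with at most three linear pieces on $(0,1]$; since $p\mapsto W_1(\mu_x^p,\mu_y^p)$ is continuous on $[0,1]$ — indeed $2$-Lipschitz, using $W_1(\mu_x^p,\mu_x^{p'})\le|p-p'|$ and the analogous bound at $y$ — the envelope description extends to all of $[0,1]$. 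Therefore $\kappa_p(x,y)=1-W_1(\mu_x^p,\mu_y^p)/\delta$ is concave and piecewise linear over $[0,1]$ with at most three linear parts.

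The main obstacle is precisely the reduction in the crucial step: a priori $W_1(\mu_x^p,\mu_y^p)$ is a supremum of infinitely many affine functions and could have many breakpoints, and it is the combination of Lemma~\ref{lemma:d-2} with integer-valuedness — which promotes $\phi(x)-\phi(y)$ from an arbitrary real in $[\delta-2,\delta]$ to one of the three integers $\delta-2,\delta-1,\delta$ — that collapses $W_1$ to a maximum of three affine functions. What makes the bookkeeping clean is the observation that within each class $\{\phi(x)-\phi(y)=m\}$ the $p$-slope of $f_\phi$ is constant, so the partial envelope $g_m$ is not merely piecewise linear but genuinely affine; the only delicate point left is the endpoint $p=0$, handled by continuity.
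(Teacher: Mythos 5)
Your proof is correct and follows essentially the same route as the paper: both reduce $W_1(\mu_x^p,\mu_y^p)$, via integer-valuedness and Lemma~\ref{lemma:d-2}, to the upper envelope of the three affine functions indexed by $\phi(x)-\phi(y)\in\{\delta-2,\delta-1,\delta\}$ (your $g_m$ are the paper's $f_j$ after normalising $\phi(y)=0$). Your explicit handling of the endpoint $p=0$ by continuity and of the finiteness of the suprema $C_m$ are minor refinements of points the paper leaves implicit.
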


\begin{proof}
For $\phi:V\rightarrow \mathbb{R}$, define $F(\phi):=\frac{1}{d_x}\sum\limits_{w\sim x} \phi(w)-\frac{1}{d_y}\sum\limits_{w\sim y} \phi(w)$, \\
and for $j\in\{\delta-2,\delta-1,\delta\}$, define a set $$A_j:=\left\{\phi:V\rightarrow \mathbb{Z} \bigg| \phi\in \textup{1-Lip},\ \phi(x)=j,\ \phi(y)=0\right\}.$$

Moreover, define a constant $c_j:=\sup\limits_{\phi\in A_j} F(\phi)$.
A linear function $f_j:\mathbb{R} \rightarrow \mathbb{R}$ is then defined by $f_j(t):=t\cdot j+(1-t)c_j$. It follows that

\begin{align} \label{3linearproof}
W_1(\mu_x^p,\mu_y^p)
&=\sup\limits_{\phi\in 1-Lip} \sum\limits_{w\in V} \phi(w)(\mu_x^p(w)-\mu_y^p(w)) \\
&\stackrel{\makebox[0pt]{Lemma \ref{lemma:d-2}} }{=}\sup\limits_{\substack{\phi\in 1-Lip\\\phi(x)-\phi(y)\in[\delta-2,\delta]}} \sum\limits_{w\in V} \phi(w)(\mu_x^p(w)-\mu_y^p(w)) \nonumber\\
&=\sup\limits_{\substack{\phi\in 1-Lip\\\phi:V\rightarrow \mathbb{Z}\\\phi(y)=0\\\phi(x)\in\{\delta-2,\delta-1,\delta\}}}\sum\limits_{w\in V} \phi(w)(\mu_x^p(w)-\mu_y^p(w)) \nonumber\\
&=\sup\limits_{\substack{\phi\in 1-Lip\\\phi:V\rightarrow \mathbb{Z}\\\phi(y)=0\\\phi(x)\in\{\delta-2,\delta-1,\delta\}}}
\bigg( p\phi(x)+\frac{1-p}{d_x}\sum\limits_{w\sim x} \phi(w)- \frac{1-p}{d_y}\sum\limits_{w\sim y} \phi(w) \bigg) \nonumber\\
&=\max\limits_{j\in\{\delta-2,\delta-1,\delta\}} \sup\limits_{\phi\in A_j} \left(p\cdot j+(1-p)F(\phi)\right) \nonumber\\
&=\max\limits_{j\in\{\delta-2,\delta-1,\delta\}} (p\cdot j+(1-p)c_j) \nonumber\\
&=\max\{f_{\delta-2}(p), f_{\delta-1}(p), f_{\delta}(p)\}, \nonumber
\end{align}
and therefore 
\begin{align} \label{eqn: idleness_3part}
\kappa_p(x,y)=1-\frac{1}{\delta}\max\{f_{\delta-2}(p), f_{\delta-1}(p), f_{\delta}(p)\}.
\end{align}

Hence, $p\mapsto \kappa_p(x,y)$ is concave and piecewise linear with at most 3 linear parts. 
\end{proof}

\begin{remark} \label{remark: last half linear}
For $p>\frac{1}{2}$, in the second line of equations \eqref{3linearproof}, the condition on the supremum can be replaced by $\phi(x)-\phi(y)=\delta$, due to the second half of Lemma \ref{lemma:d-2}. Doing so gives $W_1(\mu_x^p,\mu_y^p)=f_{\delta}(p)$ for all $p>\frac{1}{2}$. In other words, the idleness function $p\mapsto \kappa_p(x,y)$ has the last linear part (at least) on the interval $[\frac{1}{2}, 1]$. The same statement is also true in case $x\sim y$ (see \cite{BCLMP17}, Theorem 5.2). One immediate consequence is the simplification of Lin-Lu-Yau curvature:
\end{remark}

\begin{corollary} \label{cor:LLY}
Let $x\not=y \in V$. The Lin-Lu-Yau curvature satisfies $$\kappa_{LLY}(x,y)=\frac{\kappa_p(x,y)}{1-p}$$ for all $p\in [\frac{1}{2},1)$.
\end{corollary}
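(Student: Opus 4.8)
The plan is to show that on the interval $[\tfrac12,1]$ the idleness function $p\mapsto\kappa_p(x,y)$ is an affine function of $p$ vanishing at $p=1$, and then read off the claim. First I would record the boundary value: since $\mu_x^1$ and $\mu_y^1$ are the Dirac masses at $x$ and $y$ respectively, we have $W_1(\mu_x^1,\mu_y^1)=d(x,y)$ and hence $\kappa_1(x,y)=0$.

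Next I would invoke the affineness of $W_1(\mu_x^p,\mu_y^p)$ on $[\tfrac12,1]$. In the long-scale case $d(x,y)\ge 2$ this is exactly Remark \ref{remark: last half linear}, which gives $W_1(\mu_x^p,\mu_y^p)=f_\delta(p)=p\cdot\delta+(1-p)c_\delta$ for all $p>\tfrac12$, so by \eqref{eqn: idleness_3part} the function $p\mapsto\kappa_p(x,y)=1-\tfrac1\delta f_\delta(p)$ is affine on $[\tfrac12,1]$. In the short-scale case $x\sim y$ the same conclusion is provided by Theorem 5.2 of \cite{BCLMP17} (as already noted in Remark \ref{remark: last half linear}). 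In either case, an affine function on $[\tfrac12,1]$ that takes the value $0$ at $p=1$ must be of the form $\kappa_p(x,y)=c\,(1-p)$ for a constant $c$ and all $p\in[\tfrac12,1]$.

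It then follows immediately that $\frac{\kappa_p(x,y)}{1-p}=c$ is constant on $[\tfrac12,1)$, and therefore $\kappa_{LLY}(x,y)=\lim_{p\to 1}\frac{\kappa_p(x,y)}{1-p}=c=\frac{\kappa_p(x,y)}{1-p}$ for every $p\in[\tfrac12,1)$, which is the assertion.

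There is no serious obstacle here; the content is entirely carried by Remark \ref{remark: last half linear} and its short-scale counterpart. The only points requiring a little care are to make sure both cases $x\sim y$ and $d(x,y)\ge 2$ are covered (the corollary is stated for arbitrary $x\neq y$), and to note explicitly that $f_\delta$ is genuinely affine in $p$ so that ``linear on $[\tfrac12,1]$ with value $0$ at $p=1$'' really does force the form $c(1-p)$.
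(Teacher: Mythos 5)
Your proposal is correct and is exactly the argument the paper intends: the corollary is presented as an immediate consequence of Remark \ref{remark: last half linear} (together with Theorem 5.2 of \cite{BCLMP17} for the adjacent case), and your filling-in — linearity of $\kappa_p(x,y)$ on $[\tfrac12,1]$ plus the boundary value $\kappa_1(x,y)=0$ forcing the form $c(1-p)$ — is the intended reasoning, just written out in full.
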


\section{Critical points of the idleness function}\label{sect: critical points}

In this section, we will discuss about the length of each linear part of the idleness function in terms of ``critical points''.

\begin{definition}[critical points]
Define \textit{critical points} (of $\kappa_{p}(x,y)$) to be the values $p^* \in (0,1)$ such that $$\lim\limits_{p\rightarrow p^*_-} \frac{\partial}{\partial p}\kappa_{p}(x,y) \not=\lim\limits_{p\rightarrow p^*_+} \frac{\partial}{\partial p}\kappa_{p}(x,y).$$
\end{definition}

In other words, critical points are the values of $p$ where the changes of slopes of the function $p\mapsto \kappa_{p}(x,y)$ happen. We may replace $\kappa_{p}(x,y)$ by $W_1(\mu_{x}^p,\mu_y^p)$ because they are closely related by the linear relation: $$\kappa_{p}(x,y)=1-\frac{W_1(\mu_{x}^p,\mu_y^p)}{d(x,y)}$$ and hence, they share the same critical points. Since the idleness function has at most 3 linear pieces, there are at most two critical points.
Our goal of this section is to determine the possible values of critical points. 

Next is the main theorem of this section, which gives an upper bound on the values of critical points. Such bound is sharp, as shown and explained in the later section.

\begin{theorem} \label{thm:onehalf}
Let $G=(V,E)$ be a locally finite graph, and let $x,y\in V$ with $d(x,y)=\delta\ge 2$. Let $p^*\in(0,1)$ be a critical point of $\kappa_{p}(x,y)$. Then 
$$p^* \le \frac{1}{2}-\frac{1}{2}\cdot\frac{d_x+d_y}{2d_xd_y-d_x-d_y}$$
\end{theorem}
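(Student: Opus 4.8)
The plan is to use the description of $W_1(\mu_x^p,\mu_y^p)$ obtained inside the proof of Theorem~\ref{thm:3linear} and reduce the statement to an inequality between the constants $c_{\delta-2},c_{\delta-1},c_\delta$. Recall from that proof that $W_1(\mu_x^p,\mu_y^p)=\max\{f_{\delta-2}(p),f_{\delta-1}(p),f_\delta(p)\}$ with $f_j(p)=pj+(1-p)c_j$ and $c_j=\sup_{\phi\in A_j}F(\phi)$, so the graph of $p\mapsto W_1(\mu_x^p,\mu_y^p)$ is the upper envelope of three lines, it has at most two breakpoints, and these breakpoints are precisely the critical points. Since $f_\delta(1)=\delta$ is strictly larger than $f_{\delta-1}(1)=\delta-1$ and $f_{\delta-2}(1)=\delta-2$, the line $f_\delta$ is on top in a neighbourhood of $p=1$, so the largest breakpoint is the place where $f_\delta$ becomes, and stays, the maximum. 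Hence it suffices to show $f_\delta(p^*)\ge f_{\delta-1}(p^*)$ and $f_\delta(p^*)\ge f_{\delta-2}(p^*)$: each is a comparison of two lines which also holds at $p=1$, so each then holds on all of $[p^*,1]$, which forces $W_1(\mu_x^p,\mu_y^p)=f_\delta(p)$ on $[p^*,1]$ and therefore every critical point into $(0,p^*]$.

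Writing out the two inequalities $f_\delta(p^*)\ge f_{\delta-1}(p^*)$ and $f_\delta(p^*)\ge f_{\delta-2}(p^*)$ and solving for the differences of the $c_j$'s, they become $c_{\delta-1}-c_\delta\le \tfrac{p^*}{1-p^*}$ and $c_{\delta-2}-c_\delta\le \tfrac{2p^*}{1-p^*}$. A short computation with $p^*=\tfrac12-\tfrac12\cdot\tfrac{d_x+d_y}{2d_xd_y-d_x-d_y}$ gives $\tfrac{p^*}{1-p^*}=1-\tfrac{1}{d_x}-\tfrac{1}{d_y}=:G$. Thus the theorem follows from the single claim
\[
c_j-c_{j+1}\le G\qquad\text{for }j\in\{\delta-2,\delta-1\},
\]
because then $c_{\delta-1}-c_\delta\le G$ directly and $c_{\delta-2}-c_\delta=(c_{\delta-2}-c_{\delta-1})+(c_{\delta-1}-c_\delta)\le 2G$.

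To prove the claim, fix $j\in\{\delta-2,\delta-1\}$ and let $\psi\in A_j$ attain $c_j$; I want $\phi\in A_{j+1}$ with $F(\phi)\ge F(\psi)-G$. If every neighbour $w$ of $x$ satisfies $\psi(w)\ge\psi(x)=j$, then the function obtained from $\psi$ by raising only the value at $x$ to $j+1$ is still $1$-Lipschitz (the only edges to recheck are those at $x$, where the neighbouring values lie in $\{j,j+1\}$) and lies in $A_{j+1}$; since $x\notin N(x)\cup N(y)$ it has the same $F$-value as $\psi$, so $c_{j+1}\ge c_j$ and we are done. Otherwise put $\phi(v):=\max\bigl(\psi(v),\,j+1-d(v,x)\bigr)$, a maximum of two $1$-Lipschitz functions, hence $1$-Lipschitz and integer-valued, with $\phi(x)=j+1$ and (using $j\le\delta-1$) $\phi(y)=0$, so $\phi\in A_{j+1}$. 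As $\phi\ge\psi$ pointwise,
\[
F(\psi)-F(\phi)=-\frac{1}{d_x}\sum_{w\sim x}\bigl(\phi(w)-\psi(w)\bigr)+\frac{1}{d_y}\sum_{w\sim y}\bigl(\phi(w)-\psi(w)\bigr).
\]
On $N(x)$ one has $\phi(w)-\psi(w)=\max(0,j-\psi(w))\in\{0,1\}$ since $\psi(w)\ge\psi(x)-1=j-1$; on $N(y)$, using $\psi(w)\ge\psi(x)-d(w,x)=j-d(w,x)$, one gets $\phi(w)-\psi(w)=\max\bigl(0,(j+1-d(w,x))-\psi(w)\bigr)\le 1$ there as well. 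Setting $\alpha:=\#\{w\sim x:\psi(w)=j-1\}\ge 1$ (the case $\alpha=0$ was just handled) and $\beta:=\sum_{w\sim y}(\phi(w)-\psi(w))\in\{0,\dots,d_y\}$, we have $F(\psi)-F(\phi)=-\tfrac{\alpha}{d_x}+\tfrac{\beta}{d_y}$, and if $\beta\le d_y-1$ this is at most $-\tfrac1{d_x}+\tfrac{d_y-1}{d_y}=G$, giving the claim.

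The remaining case, $\alpha\ge 1$ and $\beta=d_y$ (so $\phi(w)-\psi(w)=1$ for every $w\sim y$), is the main obstacle. Unwinding the definitions, it forces $\psi(w)=j-d(w,x)$ with $d(w,x)\in\{\delta-1,\delta\}$ for all $w\sim y$; in particular $\psi$ takes its smallest $1$-Lipschitz-admissible value at every neighbour of $y$ lying on an $x$–$y$ geodesic. Here I expect the proof to exploit this geodesic rigidity: propagating the equality $\psi(w)=j-d(w,x)$ back along a shortest path from $w$ to $x$ should force $\psi$ to drop by exactly $1$ at each step and hence to equal $j-1$ at sufficiently many neighbours of $x$ that $\alpha\ge 1+\tfrac{d_x}{d_y}$ — which is exactly what is needed, since $G-(F(\psi)-F(\phi))=\tfrac{\alpha-1}{d_x}-\tfrac{1}{d_y}$ in this case — or else to show that one may replace $\psi$ by another optimal potential in $A_j$ for which $\beta<d_y$. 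Quantifying this trade-off between the size of $\alpha$ and the amount of mass $\phi$ is forced to push up on $N(y)$ is the technical heart of the argument, and it is also where the equality case lives, matching the sharpness witnessed by the example of Section~\ref{sect: example}.
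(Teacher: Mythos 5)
Your reduction is correct and, up to repackaging, is the paper's own strategy: the paper proves the bound by showing $c_{\delta-1}-c_{\delta}\le 1-\tfrac{1}{d_x}-\tfrac{1}{d_y}=:G$ (the rightmost inequality of Lemma \ref{lemma:cdelta}) and then feeding this into the explicit formula $p_2=\tfrac{c_{\delta-1}-c_\delta}{c_{\delta-1}-c_\delta+1}$ from Lemma \ref{lemma:intersectionpt}; your inequalities $c_{\delta-1}-c_\delta\le\tfrac{p^*}{1-p^*}$ and $c_{\delta-2}-c_\delta\le\tfrac{2p^*}{1-p^*}$ are the same thing seen from the envelope side. Your competitor $\phi(v)=\max\bigl(\psi(v),\,j+1-d(v,x)\bigr)$ is in fact \emph{identical} to the paper's $\phi'=\psi+\mathbbm{1}_{V_x}$, since $V_x=\{v:\psi(x)-\psi(v)=d(x,v)\}$, and your three regimes ($\alpha=0$; $\alpha\ge1$ and $\beta\le d_y-1$; $\beta=d_y$) are exactly the paper's Cases 2, 1 and 3. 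One genuine (minor) difference: by proving $c_j-c_{j+1}\le G$ for both $j=\delta-2$ and $j=\delta-1$ separately, you avoid the paper's middle inequality $c_{\delta-2}-c_{\delta-1}\le c_{\delta-1}-c_\delta$ (proved there via half-integer potentials), which the paper uses to control $p_1$ through $p_1\le p_2$. That is a real simplification --- but only once your claim is fully proved, and it is not.

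The gap is the case $\beta=d_y$, which you explicitly leave open, and the route you sketch for it does not work. In that case $\psi(x)-\psi(w)=d(x,w)$ for every $w\sim y$, and propagating this back along geodesics only tells you that \emph{some} neighbour of $x$ has value $j-1$; all of these geodesics may pass through a single neighbour of $x$ (think of a graph in which every $x$--$y$ geodesic uses the same first edge), so the hoped-for bound $\alpha\ge 1+\tfrac{d_x}{d_y}$ is false in general and no counting of this kind will close the case. The paper's resolution is different in kind: when $N(y)\subseteq V_x$ one abandons the modification on $V_x$ altogether and instead takes $\phi'':=\psi+1-\mathbbm{1}_{\{y\}}$, i.e.\ shift everything up by $1$ except at $y$. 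This is $1$-Lipschitz (the only edges to check are those at $y$, where $\psi(w)=j-d(w,x)\le 0$ and $\psi(w)\ge\psi(y)-1=-1$ force $\psi(w)+1\in\{0,1\}$), lies in $A_{j+1}$, and satisfies $F(\phi'')=F(\psi)$ because the shift adds $\tfrac{1}{d_x}\cdot d_x-\tfrac{1}{d_y}\cdot d_y=0$ to $F$; hence $c_{j+1}\ge c_j$ outright in this case, which is stronger than the $\le G$ you need (when $G \ge 0$). With that one-line substitution your argument closes and coincides with the paper's proof of Lemma \ref{lemma:cdelta}.
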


\noindent The key of the proof lies in the following two lemmas. The first one compares the terms $c_j$'s introduced in the proof of Theorem \ref{thm:3linear}. The second one give an explicit formula for critical points in terms of $c_j$'s.

\begin{lemma} \label{lemma:cdelta}
With the same setup as above, 
\begin{align} \label{cdeltaeqn}
-1 < c_{\delta-2}-c_{\delta-1} \le c_{\delta-1}-c_{\delta} \le 1-\frac{1}{d_x}-\frac{1}{d_y}.
\end{align}
\end{lemma}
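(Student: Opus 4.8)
The plan is to establish the three inequalities in \eqref{cdeltaeqn} from left to right by exhibiting, for each comparison, an explicit modification of a near-optimal potential that witnesses one $c_j$ in terms of another. Recall $c_j = \sup_{\phi \in A_j} F(\phi)$ where $A_j$ consists of integer-valued $1$-Lipschitz functions with $\phi(x)=j$, $\phi(y)=0$, and $F(\phi) = \frac{1}{d_x}\sum_{w\sim x}\phi(w) - \frac{1}{d_y}\sum_{w\sim y}\phi(w)$.

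For the leftmost inequality $-1 < c_{\delta-2} - c_{\delta-1}$: given any $\phi \in A_{\delta-1}$, I would modify it at the single vertex $x$ by setting $\psi(x) := \delta-2$ and $\psi = \phi$ elsewhere. Since $\phi$ is $1$-Lipschitz and every neighbour $w \sim x$ satisfies $|\phi(w) - (\delta-1)| \le 1$, decreasing the value at $x$ by $1$ keeps $\psi$ $1$-Lipschitz (one must check the neighbours of $x$ still satisfy the Lipschitz bound, which holds because $\phi(w) \ge \delta-2$ is not needed — we only need $|\psi(w)-\psi(x)| \le 1$, i.e. $\phi(w) \le \delta-1$, which is exactly $1$-Lipschitzness of $\phi$ at $x$). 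This $\psi$ lies in $A_{\delta-2}$, and $F(\psi) = F(\phi)$ since $F$ does not see the value at $x$ (as $d(x,y) = \delta \ge 2$, $x$ is not a neighbour of $y$, and the term $\phi(x)$ itself does not appear in $F$). Wait — I must be careful: $x$ \emph{is} a neighbour of some $w \sim x$, so changing $\phi(x)$ does not affect $F(\psi)$ only if $x \not\sim x$, which is true. Hence $c_{\delta-2} \ge F(\phi)$ for all such $\phi$, giving $c_{\delta-2} \ge c_{\delta-1}$; for the strict lower bound $c_{\delta-2} - c_{\delta-1} > -1$ I would argue in the reverse direction: from a near-optimal $\phi\in A_{\delta-2}$, raise the value at $x$ to $\delta-1$, which again preserves $1$-Lipschitzness only if all neighbours $w\sim x$ have $\phi(w)\ge \delta-2$; this may fail, so instead one truncates — replace $\phi$ by $\max(\phi, \delta-1 - d(\cdot,x))$ or a similar clamp, changing $F$ by at most the number of affected neighbours over $d_x$, which is strictly less than $1$. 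This clamping estimate is the first place care is needed.

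The middle inequality $c_{\delta-2} - c_{\delta-1} \le c_{\delta-1} - c_{\delta}$ is a discrete concavity statement and I expect it to be the main obstacle. The natural approach is: take near-optimal potentials $\phi_{\delta-2} \in A_{\delta-2}$ and $\phi_{\delta} \in A_{\delta}$, and produce from them a competitor in $A_{\delta-1}$ whose $F$-value is at least $\tfrac{1}{2}(F(\phi_{\delta-2}) + F(\phi_\delta))$. A pointwise average is not integer-valued, so one instead takes $\psi := \max(\phi_{\delta-2}, \phi_\delta - 1)$ or $\min(\phi_{\delta-2}+1, \phi_\delta)$ and checks it lands in $A_{\delta-1}$ and satisfies the $F$-inequality via a lattice identity like $\max(a,b) + \min(a,b) = a+b$ applied coordinatewise after the shift — comparing $F(\max(\phi_{\delta-2},\phi_\delta-1))$ and $F(\min(\phi_{\delta-2},\phi_\delta-1)+1)$, both of which must lie in $A_{\delta-1}$, and whose $F$-values sum to $F(\phi_{\delta-2}) + F(\phi_\delta - 1) = F(\phi_{\delta-2}) + F(\phi_\delta) - (\tfrac{1}{d_x}\cdot 0) + \ldots$; the bookkeeping of the constant shift through $F$ needs attention because $F(\phi - 1) = F(\phi)$ (the constant cancels between the $x$-sum and $y$-sum only if... it does not, since $\frac{1}{d_x}\sum_{w\sim x} 1 - \frac{1}{d_y}\sum_{w\sim y}1 = 1 - 1 = 0$), so in fact $F(\phi-1) = F(\phi)$, which simplifies things considerably. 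Verifying $\max(\phi_{\delta-2}, \phi_\delta - 1)$ is $1$-Lipschitz is automatic (max of $1$-Lipschitz functions), and that it takes value $\delta-1$ at $x$ requires $\phi_{\delta-2}(x) = \delta-2 \le \delta-1 = \phi_\delta(x) - 1$ with equality forced — one gets $\psi(x) = \delta-1$ — and $\psi(y) = \max(0, -1) = 0$. So $\psi \in A_{\delta-1}$, and similarly the min gives another element of $A_{\delta-1}$, and their $F$-values sum to $F(\phi_{\delta-2}) + F(\phi_\delta)$, yielding $2c_{\delta-1} \ge F(\phi_{\delta-2}) + F(\phi_\delta)$, hence the middle inequality after taking suprema.

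For the rightmost inequality $c_{\delta-1} - c_\delta \le 1 - \tfrac{1}{d_x} - \tfrac{1}{d_y}$: starting from a near-optimal $\phi \in A_{\delta-1}$, I would construct $\psi \in A_\delta$ by raising the value at $x$ to $\delta$. This forces every neighbour $w \sim x$ with $\phi(w) = \delta-2$ up to $\delta-1$ to preserve $1$-Lipschitzness (neighbours with $\phi(w)=\delta-1$ can stay, and no neighbour can have had value $\delta$ since that would contradict $\phi(x)=\delta-1$ being $1$-Lipschitz — wait, it wouldn't; but by Lemma~\ref{lemma:d-2}-type reasoning or direct distance bounds, neighbours of $x$ have $\phi$-values in $\{\delta-2,\delta-1,\delta\}$... actually $|\phi(w) - \phi(y)| \le d(w,y) \le \delta+1$ and $\ge \delta-1$, so $\phi(w) \in \{\delta-1,\delta,\delta+1\}\cup\ldots$; I'd pin this down using that $\phi(w)$ is within $1$ of $\phi(x)=\delta-1$). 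The increase in the $x$-sum is at least $0$ and the key point is the change to $F$: raising $\phi(x)$ affects $F$ only through... no, $\phi(x)$ does not appear in $F$ at all. So $F(\psi) - F(\phi) = \frac{1}{d_x}\sum_{w\sim x}(\psi(w) - \phi(w)) - \frac{1}{d_y}\sum_{w\sim y}(\psi(w)-\phi(w))$, and if we only need to change one neighbour of $x$ (the worst case being that exactly one $w\sim x$ must be raised) while possibly lowering neighbours of $y$, a careful accounting gives $F(\psi) \ge F(\phi) - 1 + \tfrac{1}{d_x} + \tfrac{1}{d_y}$ in the extremal configuration — this is where the constants $\tfrac1{d_x}, \tfrac1{d_y}$ enter, and getting the worst case exactly right (which neighbours are forced, and showing one cannot do worse than this) is the second delicate point. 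I would organize this by considering the "level sets" of $\phi$ near $x$ and $y$ and arguing that the cheapest valid modification realizes precisely the claimed bound.
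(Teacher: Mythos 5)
Your handling of the middle inequality is correct and is a genuinely different (and arguably cleaner) route than the paper's: the paper averages optimal potentials $\phi_{\delta-2}$ and $\phi_{\delta}$ into a half-integer-valued competitor and then removes the half-integers via floor and ceiling, whereas your lattice pair $\max(\phi_{\delta-2},\phi_{\delta}-1)$ and $\min(\phi_{\delta-2},\phi_{\delta}-1)+1$ lands directly in $A_{\delta-1}$; since $F$ is linear and vanishes on constants, the identity $\max+\min=\phi_{\delta-2}+\phi_{\delta}-1$ gives $2c_{\delta-1}\ge c_{\delta-2}+c_{\delta}$ immediately (up to an $\eps$ if the suprema are not attained). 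That piece I would accept as written.

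The two outer inequalities, however, have genuine gaps. For the leftmost one, your single-vertex modification $\psi(x):=\delta-2$ need not be $1$-Lipschitz: Lipschitzness of $\phi$ at $x$ gives $\phi(w)\le\phi(x)+1=\delta$ for $w\sim x$, \emph{not} $\phi(w)\le\delta-1$ as you assert, and a neighbour with $\phi(w)=\delta$ is entirely possible whenever $d(w,y)\ge\delta$ (only a neighbour on a geodesic to $y$ is forced to satisfy $\phi(w)\le\delta-1$). Such a $w$ violates the edge condition once $x$ is lowered, so your claimed conclusion $c_{\delta-2}\ge c_{\delta-1}$ does not follow; the paper instead lowers $\phi^*$ on the whole set $\widetilde{V}_x=\{w : x\preceq w\}$ of a partial order built from unit drops along edges, and the strictness $>-1$ comes from exhibiting a geodesic neighbour $v_1$ of $x$ outside $\widetilde{V}_x$, so that $|N(x)\cap\widetilde{V}_x|<d_x$. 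Your fallback ``clamping'' is the right instinct, but the assertion that it costs strictly less than $1$ is precisely what must be proved. For the rightmost inequality --- the one carrying the constants $\tfrac{1}{d_x},\tfrac{1}{d_y}$ on which Theorem~\ref{thm:onehalf} rests --- your accounting is asserted rather than derived. Raising $\phi(x)$ to $\delta$ forces you to raise exactly the descendant set $V_x=\{w:w\preceq x\}$, and the bound $F(\psi)-F(\phi)\ge\tfrac{1}{d_x}+\tfrac{1}{d_y}-1$ requires both $|N(x)\cap V_x|\ge 1$ and $|N(y)\cap V_x|\le d_y-1$. Neither is automatic: when $V_x=\{x\}$, and, more seriously, when \emph{every} neighbour of $y$ lies in $V_x$, the count degenerates and the paper must switch to a different competitor (namely $\phi^*+1$ everywhere except at $y$) to still conclude $c_{\delta}\ge c_{\delta-1}$. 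Your sketch does not anticipate these degenerate cases, so the ``extremal configuration'' claim is unsupported as it stands.
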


\noindent The proof of Lemma \ref{lemma:cdelta} is postponed towards the end of this section.

\begin{lemma} \label{lemma:intersectionpt}
With the same setup as above, define constants $p_1,p_2\in\mathbb{R}$ to be
\begin{align*}
p_1&:=\frac{c_{\delta-2}-c_{\delta-1}}{c_{\delta-2}-c_{\delta}+1}\\
p_2&:=\frac{c_{\delta-1}-c_{\delta}}{c_{\delta-1}-c_{\delta}+1}
\end{align*}
	
Then, for all $t\in\mathbb{R}$, 
\begin{align} \label{eqn:max_f}
\max\{f_{\delta-2}(t), f_{\delta-1}(t), f_{\delta}(t)\}=
\begin{cases}
f_{\delta-2}(t) &\textup{ if } t\le p_1\\
f_{\delta-1}(t) &\textup{ if } p_1\le t\le p_2\\
f_{\delta}(t) &\textup{ if } p_2\le t.
\end{cases}
\end{align}
	
Consequently, $p_1$ and $p_2$ are the only potentially critical points of $\kappa_p(x,y)$. Moreover, for each $i\in \{1,2\}$, $p_i$ is a critical point if and only if $p_i\in(0,1)$.
\end{lemma}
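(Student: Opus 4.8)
The plan is to work directly with the three linear functions $f_j(t)=t\cdot j+(1-t)c_j$ for $j\in\{\delta-2,\delta-1,\delta\}$ and exploit the ordering of the $c_j$ given by Lemma \ref{lemma:cdelta}. First I would record the pairwise intersection points. Writing $f_{\delta-2}(t)=f_{\delta-1}(t)$ and solving gives $t\big((\delta-2)-(\delta-1)+c_{\delta-1}-c_{\delta-2}\big) = c_{\delta-1}-c_{\delta-2}$, i.e. $t\big(c_{\delta-1}-c_{\delta-2}-1\big)=c_{\delta-1}-c_{\delta-2}$, which rearranges to $t = \frac{c_{\delta-2}-c_{\delta-1}}{c_{\delta-2}-c_{\delta-1}+1} = p_1$; note $c_{\delta-2}-c_{\delta-1}+1>0$ by the strict inequality $c_{\delta-2}-c_{\delta-1}>-1$ in Lemma \ref{lemma:cdelta}, so $p_1$ is well-defined. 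Similarly $f_{\delta-1}(t)=f_{\delta}(t)$ yields $t=p_2$, again with positive denominator. One also checks that $f_{\delta-2}(1)=\delta-2<\delta-1=f_{\delta-1}(1)<\delta=f_{\delta}(1)$, so near $t=1$ the maximum is $f_{\delta}$, while at $t=0$ we have $f_j(0)=c_j$, and Lemma \ref{lemma:cdelta} gives $c_{\delta-2}\ge c_{\delta-1}\ge c_\delta$ (from $c_{\delta-2}-c_{\delta-1}\ge 0$? — actually only $c_{\delta-2}-c_{\delta-1}\le c_{\delta-1}-c_\delta\le 1-\tfrac1{d_x}-\tfrac1{d_y}$, so I must be slightly careful; the left-hand quantities could in principle be negative, but the slopes still order things).

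The cleaner route is to compare slopes. The slope of $f_j$ is $j-c_j$, so $\mathrm{slope}(f_{\delta})-\mathrm{slope}(f_{\delta-1}) = 1-(c_\delta-c_{\delta-1}) = 1+(c_{\delta-1}-c_\delta)>0$ since $c_{\delta-1}-c_\delta > c_{\delta-2}-c_{\delta-1} > -1$; likewise $\mathrm{slope}(f_{\delta-1})-\mathrm{slope}(f_{\delta-2}) = 1+(c_{\delta-2}-c_{\delta-1})>0$. Thus $f_{\delta-2}$ has the smallest slope, $f_{\delta-1}$ the middle, $f_{\delta}$ the largest. For three lines with strictly increasing slopes, the upper envelope over $\mathbb{R}$ is always $f_{\text{smallest slope}}$, then $f_{\text{middle}}$, then $f_{\text{largest slope}}$, with the breakpoints being the consecutive intersection points \emph{provided} they occur in the right order, i.e. $p_1\le p_2$; otherwise the middle line is never on top and the envelope jumps directly from $f_{\delta-2}$ to $f_\delta$. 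So the next step is to verify $p_1\le p_2$, which, after clearing the (positive) denominators, reduces to $(c_{\delta-2}-c_{\delta-1})(c_{\delta-1}-c_\delta+1)\le(c_{\delta-1}-c_\delta)(c_{\delta-2}-c_{\delta-1}+1)$, i.e. $c_{\delta-2}-c_{\delta-1}\le c_{\delta-1}-c_\delta$ — exactly the middle inequality of Lemma \ref{lemma:cdelta}. Hence $p_1\le p_2$ and the case distinction in \eqref{eqn:max_f} holds on all of $\mathbb{R}$; the degenerate case $p_1=p_2$ is harmless as the middle interval is a single point.

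From \eqref{eqn:max_f} together with \eqref{eqn: idleness_3part}, $\kappa_p(x,y) = 1-\tfrac1\delta\max\{f_{\delta-2}(p),f_{\delta-1}(p),f_\delta(p)\}$ is, on $[0,1]$, a piecewise-linear function whose slope can only change at $p=p_1$ and $p=p_2$, so these are the only possible critical points. For the "if and only if" I would argue: if $p_i\notin(0,1)$ then the corresponding breakpoint lies outside the open interval where critical points are defined, so it is not a critical point. Conversely, if $p_i\in(0,1)$, I need the slope to genuinely change there. At $p_1$ the left slope is $-(\delta-2-c_{\delta-2})/\delta$ and the right slope is $-(\delta-1-c_{\delta-1})/\delta$ (or $-(\delta-c_\delta)/\delta$ if $p_1=p_2$), and these differ because, as computed above, the three $f_j$ have pairwise distinct slopes; the only subtlety is the coincidence $p_1=p_2\in(0,1)$, where one must check $f_{\delta-2}$ and $f_\delta$ still have different slopes, which holds since $2-(c_\delta - c_{\delta-2}) = 2 + (c_{\delta-2}-c_{\delta-1})+(c_{\delta-1}-c_\delta) > 0$. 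This finishes the proof.

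The main obstacle, such as it is, is bookkeeping rather than depth: one must be careful that the $c_j$ differences are not assumed nonnegative (only bounded below by $-1$ and suitably ordered), so the argument should be phrased in terms of slopes of the $f_j$ — which are unconditionally ordered by Lemma \ref{lemma:cdelta} — rather than in terms of the values $c_j=f_j(0)$, and one must handle the degenerate coincidence $p_1=p_2$ when checking that a breakpoint in $(0,1)$ is a genuine critical point.
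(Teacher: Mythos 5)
Your proof is correct and follows essentially the same route as the paper: identify the pairwise intersection points of the three lines, order them via the middle inequality of Lemma \ref{lemma:cdelta} (the paper does this through monotonicity of $g(t)=t/(t+1)$, you by clearing the positive denominators --- the same computation), and conclude from the strict ordering of the slopes $j-c_j$; your explicit treatment of the degenerate case $p_1=p_2$ is a small bonus over the paper's version. Note only that the denominator of $p_1$ in the lemma's statement, $c_{\delta-2}-c_{\delta}+1$, is a typo for $c_{\delta-2}-c_{\delta-1}+1$, which is the value you (correctly) derive and the one the paper itself uses in its proof and in Section \ref{sect: example}.
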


\begin{proof} [Proof of Lemma \ref{lemma:intersectionpt}]
First, note that the denominators of $p_1$ and $p_2$ are positive real numbers, due to Lemma \ref{lemma:cdelta}.
	
Next, we show that $p_1 \le p_2$. Consider the function $g:(-1,\infty)\rightarrow \mathbb{R}$ defined by $$g(t):=\frac{t}{t+1},$$ which is an increasing function on $t$. 

Note that $p_1=g(c_{\delta-2}-c_{\delta-1})$ and $p_2=g(c_{\delta-1}-c_{\delta})$. Hence, Lemma \ref{lemma:cdelta} implies $$p_1= g(c_{\delta-2}-c_{\delta-1})\le g(c_{\delta-1}-c_{\delta})=p_2$$
	
Next, we compare between $f_{\delta-2}$ and $f_{\delta-1}$. From the definition $f_j(t)=t\cdot j+(1-t)c_j$, we have
\begin{align*}
f_{\delta-1}(t) \ge f_{\delta-2}(t)  
&\Leftrightarrow t(c_{\delta-2}-c_{\delta-1}+1) \ge c_{\delta-2}-c_{\delta-1} \\
&\Leftrightarrow t\ge \frac{c_{\delta-2}-c_{\delta-1}}{c_{\delta-2}-c_{\delta-1}+1}=p_1
\end{align*}
Similarly, comparison between $f_{\delta-1}$ and $f_{\delta}$ gives:
\begin{align*}
f_{\delta}(t) \ge f_{\delta-1}(t)  &\Leftrightarrow t\ge \frac{c_{\delta-1}-c_{\delta}}{c_{\delta-1}-c_{\delta}+1}=p_2.
\end{align*}
By the above comparisons, we can then conclude the equation

$$\max\{f_{\delta-2}(t), f_{\delta-1}(t), f_{\delta}(t)\}=
\begin{cases}
f_{\delta-2}(t) &\textup{ if } t\le p_1\\
f_{\delta-1}(t) &\textup{ if } p_1\le t\le p_2\\
f_{\delta}(t) &\textup{ if } p_2\le t.
\end{cases}$$
as desired. Moreover, one can check that the slopes of $f_{\delta-2}$,$f_{\delta-1}$, and $f_{\delta}$ are all different:$$\frac{\partial}{\partial t}f_{\delta-2} < \frac{\partial}{\partial t}f_{\delta-1} <\frac{\partial}{\partial t}f_{\delta}.$$
	
The second statement in the lemma then immediately follows by renaming the variable $t$ as $p$ with a further restriction $p\in [0,1]$, and recalling the equation \eqref{eqn: idleness_3part}:
$$\kappa_p(x,y)=1-\frac{1}{\delta}\max\{f_{\delta-2}(p), f_{\delta-1}(p), f_{\delta}(p)\}.$$
\end{proof}

\begin{proof} [Proof of Theorem \ref{thm:onehalf}]
Recall the function $g$ defined in the proof of Lemma \ref{lemma:intersectionpt}. The monotonicity of $g$ together with Lemma \ref{lemma:cdelta} implies that
$$p_1\le p_2= g(c_{\delta-1}-c_{\delta}) \le g(1-\frac{1}{d_x}-\frac{1}{d_y})=\frac{1}{2}-\frac{1}{2}\cdot\frac{d_x+d_y}{2d_xd_y-d_x-d_y},$$
which concludes the theorem.
\end{proof}

Now we come back to prove Lemma \ref{lemma:cdelta}.

\begin{proof}[Proof of Lemma \ref{lemma:cdelta}]
First, we prove the rightmost inequality: $$c_{\delta-1}-c_{\delta} \le 1-\frac{1}{d_x}-\frac{1}{d_y}.$$
	
Consider $\phi^*:V\rightarrow \mathbb{Z}$ such that $\phi^*\in A_{\delta-1}$ and $F(\phi^*)=\sup\limits_{\phi\in A_{\delta-1}}F(\phi)=:c_{\delta-1}$. We will give a partial ordering to the set of vertices $V$ by the following rule: for $a,b\in V$ such that $a\sim b$, if $\phi^*(a)-\phi^*(b)=1$, then call such $b$ a \textit{child} of $a$. Moreover, for $a,b\in V$, give the ordering $b \prec a$ iff $b$ is a \textit{descendant} of $a$, that is there exist $a=b_0,b_1,b_2,...,b_n=b \in V$ such that $b_{i}$ is a child of $b_{i+1}$ for all $0\le i<n$. Lipschitz property of $\phi^*$ implies that 
\begin{align} \label{eqn:po_relation}
\phi^*(a)-\phi^*(b)=d(a,b) \textup{ for all } b \preceq a.
\end{align}
In particular, since $\phi^*(x)-\phi^*(y)=\delta-1 <d(x,y)$, it implies that $y\not\preceq x$.
	
Define a set of vertices $V_x\subset V$ by 
\begin{align*}
V_x:=\{w\in V\Big| w\preceq x \}.
\end{align*}
	Now, define a function $\phi':V\rightarrow \mathbb{Z}$ by
\begin{align*}
\phi'(w):=\begin{cases}
\phi^*(w)+1 &\textup{ if } w\in V_x, \\ \phi^*(w) &\textup{ otherwise}.
\end{cases}
\end{align*}
	
We will now show that $\phi'$ is $1$-Lipschitz. It is sufficient to show that $\phi'(w)-\phi'(z) \le 1$ for any neighbours $w\stackrel{G}{\sim} z\in V$. By definition of $\phi'$, we have
\begin{align*}
\phi'(w)-\phi'(z) &= (\phi^*(w)+\mathbbm{1}_{V_x}(w))-(\phi^*(z)+\mathbbm{1}_{V_x}(z)) \\
&=\phi^*(w)-\phi^*(z) + \mathbbm{1}_{V_x}(w)-\mathbbm{1}_{V_x}(z) \\
&\le 1+\mathbbm{1}_{V_x}(w)-\mathbbm{1}_{V_x}(z)
\end{align*}
which is less than or equal to $1$, except when $\phi^*(w)-\phi^*(z)=1$ and $\mathbbm{1}_{V_x}(w)=1$ and $\mathbbm{1}_{V_x}(z)=0$, simultaneously. These exception conditions would imply that $z$ is a child of $w$, and $w\preceq x$, and $z\not\preceq x$, which is impossible as it contradicts to the transitivity of partial ordering. Therefore, $\phi'$ is $1$-Lipschitz as desired. Moreover, $\phi'\in A_{\delta}$ (because $\phi'(x)=\phi^*(x)+1=\delta$ and $\phi'(y)=\phi^*(y)=0$ since $y\not\in V_x$). 
	
Comparison between $\phi^*$ and $\phi'$ gives
\begin{align}\label{cdelta1}
c_{\delta} = \sup\limits_{\phi\in A_{\delta}} F(\phi) \ge F(\phi')
&= \frac{1}{d_x}\sum\limits_{w\sim x}\phi'(w)-\frac{1}{d_y}\sum\limits_{w\sim y}\phi'(w) \\
&= \frac{1}{d_x}\sum\limits_{w\sim x}\phi^*(w)-\frac{1}{d_y}\sum\limits_{w\sim y}\phi^*(w) + \frac{1}{d_x}\sum\limits_{\substack{w\sim x\\w\in V_x}} 1 - \frac{1}{d_y}\sum\limits_{\substack{w\sim y\\w\in V_x}} 1 \nonumber \\
&= F(\phi^*)+\frac{1}{d_x}|N(x)\cap V_x|-\frac{1}{d_y}|N(y) \cap V_x| \nonumber \\
&= c_{\delta-1}+\frac{1}{d_x}|N(x)\cap V_x|-\frac{1}{d_y}|N(y) \cap V_x| \nonumber
\end{align}
where $N(x)$ and $N(y)$ are the sets of neighbours of $x$ and of $y$, respectively.
	
A simple bound on \eqref{cdelta1} will give
$$c_{\delta}-c_{\delta-1} \ge \frac{1}{d_x}|N(x)\cap V_x|-\frac{1}{d_y}|N(y) \cap V_x| \ge \frac{1}{d_x}(0)-\frac{1}{d_y}(d_y)=-1.$$
	
\noindent However, this inequality can be improved by the following 3-case separation.
\begin{itemize}
\item[$\bullet$]Case 1: $N(x)\cap V_x \not= \emptyset$ and $N(y)\cap V_x \not= N(y)$.\\
Then
\begin{align*}
c_{\delta}-c_{\delta-1} 
&\ge \frac{1}{d_x}|N(x)\cap V_x|-\frac{1}{d_y}|N(y) \cap V_x| \\ 
&\ge \frac{1}{d_x}(1)-\frac{1}{d_y}(d_y-1)=-1+\frac{1}{d_x}+\frac{1}{d_y}.
\end{align*}
		
\item[$\bullet$]Case 2: $N(x)\cap V_x = \emptyset$.\\
It means that $x$ has no child and hence no descendant, i.e. $V_x=\{x\}$. Thus
$$c_{\delta}-c_{\delta-1} \ge \frac{1}{d_x}|N(x)\cap V_x|-\frac{1}{d_y}|N(y) \cap V_x| = 0.$$

\item[$\bullet$]Case 3: $N(y)\cap V_x = N(y)$\\
It means that $y'\prec x$ for all neighbours $y'$ of $y$. We now define a new function $\phi'':V\rightarrow \mathbb{Z}$ by
\begin{align*}
\phi''(w):=\begin{cases}
\phi^*(w)+1 &\textup{ if } w\not=y, \\ \phi^*(w) &\textup{ if } w=y.
\end{cases}
\end{align*} which is 1-Lipschitz and in $A_{\delta}$ (similar as to how $\phi'$ 1-Lipschitz and in $A_{\delta}$). It follows that	
\begin{align*}
c_{\delta} = \sup\limits_{\phi\in A_{\delta}} F(\phi) \ge F(\phi'')
&= \frac{1}{d_x}\sum\limits_{w\sim x}\phi''(w)-\frac{1}{d_y}\sum\limits_{w\sim y}\phi''(w) \\
&= \frac{1}{d_x}\sum\limits_{w\sim x}(\phi^*(w)+1)-\frac{1}{d_y}\sum\limits_{w\sim y}(\phi^*(w)+1) \nonumber \\
&= F(\phi^*)= c_{\delta-1} \nonumber
\end{align*}
Hence, $c_{\delta}-c_{\delta-1} \ge 0.$
\end{itemize}
	
From the three cases above, we can conclude the rightmost inequality in \eqref{cdeltaeqn}: $$c_{\delta-1}-c_{\delta} \le 1-\frac{1}{d_x}-\frac{1}{d_y}.$$
	
Next, we prove the leftmost inequality:
$$-1\le c_{\delta-2}-c_{\delta-1}$$ with a similar method as above.
	
Define another set of vertices $\widetilde{V}_x \subset V$ by
\begin{align*}
\widetilde{V}_x:=\{w\in V\Big| x \preceq w \},
\end{align*}
	
\noindent Then define a function $\widetilde{\phi}:V\rightarrow \mathbb{Z}$ by
\begin{align*}
\widetilde{\phi}(w):=\begin{cases}
\phi^*(w)-1 &\textup{ if } w\in \widetilde{V}_x, \\ \phi^*(w) &\textup{ otherwise}.
\end{cases}
\end{align*}
	
By similar arguments, the function $\widetilde{\phi}$ is also $1$-Lipschitz, and it is in $A_{\delta-2}$ (because $\widetilde{\phi}(x)=\phi^*(x)-1=\delta-2$ and $\phi'(y)=\phi^*(y)=0$ since $y\not\in \widetilde{V}_x$). Comparison between $\phi^*$ and $\widetilde{\phi}$ gives
\begin{align}\label{cdelta2}
c_{\delta-2} = \sup\limits_{\phi\in A_{\delta-2}} F(\phi) \ge F(\widetilde{\phi})
&= \frac{1}{d_x}\sum\limits_{w\sim x}\widetilde{\phi}(w)-\frac{1}{d_y}\sum\limits_{w\sim y}\widetilde{\phi}(w) \\
&= \frac{1}{d_x}\sum\limits_{w\sim x}\phi^*(w)-\frac{1}{d_y}\sum\limits_{w\sim y}\phi^*(w) - \frac{1}{d_x}\sum\limits_{\substack{w\sim x\\w\in \widetilde{V}_x}} 1 + \frac{1}{d_y}\sum\limits_{\substack{w\sim y\\w\in \widetilde{V}_x}} 1 \nonumber \\
&= F(\phi^*)-\frac{1}{d_x}|N(x)\cap \widetilde{V}_x|+\frac{1}{d_y}|N(y) \cap \widetilde{V}_x| \nonumber \\
&= c_{\delta-1}-\frac{1}{d_x}|N(x)\cap \widetilde{V}_x|+\frac{1}{d_y}|N(y) \cap \widetilde{V}_x| \nonumber\\
&\ge c_{\delta-1}-\frac{1}{d_x}(d_x)+\frac{1}{d_y}(0) \nonumber\\
&\ge c_{\delta-1}-1 \nonumber
\end{align}

By considering a geodesic from $x$ to $y$, namely $x=v_0\sim v_1 \sim v_2 \sim...\sim v_{\delta}=y$, we have that $v_1\in N(x)$ and $$\phi^*(v_1)=\phi^*(v_1)-\phi^*(y)\le d(v_1,y)=\delta-1=\phi^*(x),$$ which implies that $x\not\preceq v_1$, i.e. $v_1\not\in \widetilde{V}_x$.
Hence, $|N(x)\cap \widetilde{V}_x| < d_x$, and \eqref{cdelta2} then gives $$c_{\delta-2}>c_{\delta-1}-\frac{1}{d_x}(d_x)+\frac{1}{d_y}(0)=c_{\delta-1}-1$$
yielding the leftmost inequality in \eqref{cdeltaeqn}.
	
Lastly, we will prove the middle inequality in \eqref{cdeltaeqn}, or equivalently,
$$c_{\delta-2}+c_{\delta} \le 2c_{\delta-1}.$$
	
Let $\phi_{\delta-2}\in A_{\delta-2}$ and $\phi_{\delta}\in A_{\delta}$ be two 1-Lipschitz functions such that $c_{\delta-2}=F(\phi_{\delta-2})$ and $c_{\delta}=F(\phi_{\delta})$. Consider the function $\Phi:=\frac{1}{2}(\phi_{\delta-2}+\phi_{\delta}).$ From the definition, we know that $\Phi$ is also 1-Lipschitz, and $\Phi(v)\in \mathbb{Z}/2$ for all $v\in V$, and $\Phi(x)=\delta-1$ and $\Phi(y)=0$.
Therefore $$\frac{1}{2}(c_{\delta-2}+c_{\delta}) = \frac{F(\phi_{\delta-2})+F(\phi_{\delta})}{2}=F\Big(\frac{\phi_{\delta-2}+\phi_{\delta}}{2}\Big)=F(\Phi) \le \sup\limits_{\phi\in A_j[\mathbb{Z}/2]}F(\phi)$$

where $A_j[\mathbb{Z}/2]:=\left\{\phi:V\rightarrow \mathbb{Z}/2 \bigg| \phi\in \textup{1-Lip},\ \phi(x)=j,\ \phi(y)=0\right\}$ is defined in parallel to the previously defined $A_j=\left\{\phi:V\rightarrow \mathbb{Z} \bigg| \phi\in \textup{1-Lip},\ \phi(x)=j,\ \phi(y)=0\right\}$.
	
Lastly, we are left to show that 
$$\sup\limits_{\phi\in A_{\delta-1}[\mathbb{Z}/2]}F(\phi)
=\sup\limits_{\phi\in A_{\delta-1}}F(\phi)=:c_{\delta-1},$$
where the proof is very similar to the proof of Lemma 5.1 in \cite{BCLMP17}.

The inequality $\sup\limits_{\phi\in A_{\delta-1}[\mathbb{Z}/2]}F(\phi) \ge c_{\delta-1}$ is trivial since $A_{\delta-1} \subseteq A_{\delta-1}[\mathbb{Z}/2]$.

On the other hand, choose a function $\phi_{\delta-1}\in A_{\delta-1}[\mathbb{Z}/2]$ such that $F(\phi_{\delta-1}) =\sup\limits_{\phi\in A_{\delta-1}[\mathbb{Z}/2]}F(\phi).$ Note that $\phi_{\delta-1}(v)=\frac{1}{2}(\lfloor \phi_{\delta-1}(v) \rfloor + \lceil \phi_{\delta-1}(v) \rceil)$ for all $v\in V$, and $\lfloor \phi_{\delta-1} \rfloor ,\lceil \phi_{\delta-1} \rceil \in A_{\delta-1}$. Therefore, $$\sup\limits_{\phi\in A_{\delta-1}[\mathbb{Z}/2]}F(\phi)=F(\phi_{\delta-1})=F\Big(\frac{\lfloor \phi_{\delta-1} \rfloor + \lceil \phi_{\delta-1} \rceil}{2}\Big)=\frac{F(\lfloor \phi_{\delta-1} \rfloor)+F(\lceil \phi_{\delta-1} \rceil)}{2} \le c_{\delta-1}$$ as desired. 
\end{proof}

\begin{remark}[Length of the first linear part] \label{remark:firstcrit}
Note that each $c_{j}\in \mathbb{Z}/l$ where $l:=\textup{lcm}(d_x,d_y)$. Therefore, $p_1$ and $p_2$ must be in the form of $\frac{\frac{a}{l}}{\frac{a}{l}+1}=\frac{a}{a+l}$ for some $a\in \mathbb{Z}$. Hence, the least possible value for a positive critical point is $\frac{1}{1+l}$. In other words, first linear part of the function $p\mapsto \kappa_p(x,y)$ is at least the interval $[0,\frac{1}{1+\textup{lcm}(d_x,d_y)}]$.
\end{remark}

\begin{remark}[Length of the last linear part] \label{remark:critpts}
Theorem \ref{thm:onehalf} says that the last linear part of the function $p\mapsto \kappa_p(x,y)$ is at least the interval $$\Big[ \frac{1}{2}-\frac{1}{2}\cdot\frac{d_x+d_y}{2d_xd_y-d_x-d_y},1\Big].$$
In a special case that vertices $x$ and $y$ have the same degree $d_x=d_y=D$, each critical point $p^*$ of $\kappa_p(x,y)$ satisfies 
\begin{align} \label{eqn:critpt_sharp}
p^*\le \frac{1}{2}-\frac{1}{2}\cdot\frac{2D}{2D^2-2D}=\frac{D-2}{2D-2}.
\end{align}
Moreover, from the definition, $c_j\in \mathbb{Z}/{D}$, so $p_1,p_2$ must be in the form of $\frac{a}{D+a}$ for some integer $1\le a\le D-2$.
\end{remark}

Next section provides an example of a graph with $d_x=d_y=D$ where the inequality \eqref{eqn:critpt_sharp} holds sharp, i.e. a critical point occurs exactly at $\frac{D-2}{2D-2}$.

\newpage
\section{An important family of examples} \label{sect: example}

In this section we aim to construct a graph $G=(V,E)$ with points $x,y\in G$ such that $d(x,y)\ge 2$ and the idleness function $p\mapsto \kappa_p(x,y)$ has three linear pieces and has one critical point as large as the one mentioned in \eqref{eqn:critpt_sharp}. 

Let $m,n,k$ be arbitrary natural numbers (including zero). Define vertices of $G$ to be $$V:=\{x,y\}\cup \{x_{0},x_{1},y_{0},y_{1}\} \cup \bigcup\limits_{i=1}^{m} \{x'_i,y'_i,v_i,w_i\} \cup \bigcup\limits_{i=1}^{n} \{x''_i,y''_i,z_i\} \cup \bigcup\limits_{i=1}^{k} \{x'''_i,y'''_i\},$$
and define edges of $G$ to be
\begin{align*}
E:=
&\{(x,x_0),(x_0,y_0),(y_0,y)\} \ \cup \ \{(x,x_1),(x_1,y_1),(y_1,y)\}  \ \cup \ \\
& \bigcup\limits_{i=1}^{m} \{(x,x'_i),(x'_i,v_i),(v_i,w_i),(w_i,y'_i),(y'_i,y)\} \ \cup \\
&\bigcup\limits_{i=1}^{n} \{(x,x''_i),(x''_i,z_i),(z_i,y''_i),(y''_i,y)\} \ \cup \\
&\bigcup\limits_{i=1}^{k} \{(x,x'''_i),(x'''_i,y'''_i),(y'''_i,y)\} \ \cup \\
& \bigcup\limits_{i=1}^{m} \{(x_0,y'_i), (x'_i,y_1)\} \ \cup \ 
\bigcup\limits_{i=1}^{n} \{(x_0,y''_i), (x''_i,y_1)\} \ \cup \ 
\bigcup\limits_{i=1}^{k} \{(x_0,y'''_i), (x'''_i,y_1)\}
\end{align*}
If $m$, $n$ or $k$ is zero, we simply remove the related vertices and edges.

\begin{figure}[h!] 
	\begin{tikzpicture} [scale=0.7]
	
	\foreach \x in {0,3,5,7,10} {
		\drawLinewithBG{5,0}{\x,3}{black};
		\drawLinewithBG{5,12}{\x,9}{black};
	}
	
	\foreach \x in {0,3,5,7,10} {
		\drawLinewithBG{\x,3}{\x,9}{blue};
	}
	
	\foreach \x in {3,5,7} {
		\drawLinewithBG{\x,3}{0,9}{red};
		\drawLinewithBG{\x,9}{10,3}{red};	
	}
	
	\drawlabelnode{5,12}{above right}{$x$};
	\drawlabelnode{5,0}{below right}{$y$}
	\drawlabelnode{0,9}{left}{$x_0$};
	\drawlabelnode{10,9}{right}{$x_1$};
	\drawlabelnode{0,3}{left}{$y_0$};
	\drawlabelnode{10,3}{right}{$y_1$};
	
	\drawlabelnode{5,9}{right}{$x''_n$};
	\drawlabelnode{5,6}{right}{$z_n$};
	\drawlabelnode{5,3}{right}{$y''_n$};
	
	\drawlabelnode{3,9}{right}{$x'_m$};
	\drawlabelnode{3,7}{right}{$u_m$};
	\drawlabelnode{3,5}{right}{$v_m$};
	\drawlabelnode{3,3}{right}{$y'_m$};
	
	\drawlabelnode{7,9}{right}{$x'''_k$};
	\drawlabelnode{7,3}{right}{$y'''_k$};
	\end{tikzpicture}
	\caption{The constructed graph $G$ with $m=n=k=1$.}
	\label{fig:graph_mnk}
\end{figure}
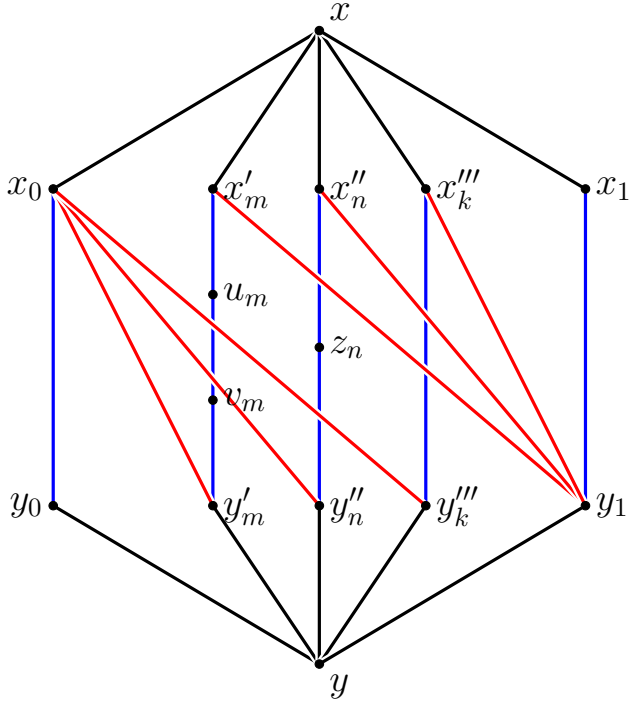

The graph $G$ is shown in Figure \ref{fig:graph_mnk} in case $m=n=k=1$ (but the indexes $m,n,k$ are kept in the labelling for clarity).

In the constructed graph $G$, $d(x,y)=3$ and $d_x=d_y=D=2+m+n+k$. Our goal is to show that the function $p\mapsto W_1(\mu_x^p,\mu_y^p)$ has its critical points at $\frac{m}{D+m}$ and $\frac{m+n}{D+m+n}$. In particular, if $k=0$, then the larger critical point coincides with $\frac{D-2}{2D-2}$, the maximum possible value mentioned in Remark \ref{remark:critpts}.)\\

Recall the definitions, for $j\in\{1,2,3\}$,
$$c_j=\sup\limits_{\phi\in A_j} F(\phi)=\sup\limits_{\phi\in A_j} \frac{1}{D}\bigg( \sum\limits_{w\sim x}\phi(w)-\sum\limits_{w\sim y}\phi(w)\bigg)$$ and
$$f_j(p)=p\cdot j+(1-p)c_j,$$ and 
$$W_1(\mu_x^p,\mu_y^p)=\max\{f_1(p),f_2(p),f_3(p)\}.$$

We need to calculate the value of $c_j$ for each $j\in\{1,2,3\}$. First, we start by giving a lower bound to $c_j$ by choosing an appropriate function $\phi_j\in A_j$ for each $j$.

\begin{table}[ht]
\centering 
\begin{tabular}{|c | c c | c c c c| c c c c| c c c| c c |}
\hline
		& \multicolumn{15}{|c|}{z} \\
\cline{2-16}
\raisebox{-1.0ex}{$\phi_j(z)$} &\raisebox{-1.0ex}{$x$} &\raisebox{-1.0ex}{$y$} &\raisebox{-1.0ex}{$x_0$} &\raisebox{-1.0ex}{$x_1$} &\raisebox{-1.0ex}{$y_0$} &\raisebox{-1.0ex}{$y_1$} &$x'_i$ &$u_i$ &$v_i$ &$y'_i$ &$x''_i$ &$z_i$ &$y''_i$ &$x'''_i$ &$y'''_i$  \\
& & & & & & & \multicolumn{4}{c}{$1\le i \le m$} & \multicolumn{3}{|c|}{$1\le i \le n$} & \multicolumn{2}{|c|}{$1\le i \le k$}\\[0.5ex]
\hline
$\phi_1$ &1 &0& 0&2 &-1 &1 &2 &1 &0 &-1 &2 &1 &0 &1 &0 \\[1.0ex]
$\phi_2$ &2 &0& 1&2 &0 &1 &2 &1 &0 &0 &2 &1 &0 &1 &0 \\[1.0ex]
$\phi_3$ &3 &0& 2&2 &1 &1 &2 &1 &1 &1 &2 &1 &1 &2 &1 \\[1.0ex]
\hline
\end{tabular}
\caption{Functions $\phi_j$ evaluated at the vertices of $G$}
\label{table:potentials_123}
\end{table}
Define functions $\phi_1,\phi_2,\phi_3:V\rightarrow \mathbb{Z}$ as in Table \ref{table:potentials_123}. It can be easily checked that $\phi_j\in A_j$ for $j\in\{1,2,3\}$, and hence we obtains three following inequalities:
\begin{flalign} 
c_1 &\ge F(\phi_1)=\frac{1}{D}\bigg((2m+2n+k+2)-(-m)\bigg)=\frac{3m+2n+k+2}{D} \label{c1lowerbound} \\
c_2 &\ge F(\phi_2)=\frac{1}{D}\bigg((2m+2n+k+3)-(1)\bigg)=\frac{2m+2n+k+2}{D} \label{c2lowerbound}\\
c_3 &\ge F(\phi_3)=\frac{1}{D}\bigg((2m+2n+2k+4)-(m+n+k+2)\bigg)=\frac{m+n+k+2}{D}=1 \label{c3lowerbound}
\end{flalign}

Next, we give an upper bound to $c_1,\ c_2,\ c_3$ by calculating the costs of transport plans $\pi_1, \pi_2, \pi_3$ from $\mu_x^p$ to $\mu_y^p$ (with idleness $p=0,\ \frac{m}{D+m},\ \frac{m+n}{D+m+n}$, respectively). The plans $\pi_1, \pi_2, \pi_3$ are constructed as in Table \ref{table:plans_123}. \newpage

\begin{table}[ht]
	\centering 
	\begin{tabular}{|c| c| c| c c c c c c|}
		\hline
		\multicolumn{3}{|c|}{} & \multicolumn{6}{|c|}{z} \\
		\cline{4-9}
		\multicolumn{3}{|c|}{$\pi_j(w,z)$} &\raisebox{-1.0ex}{$\ y\ $} &\raisebox{-1.0ex}{$\ y_0\ $} &\raisebox{-1.0ex}{$\ y_1\ $} &$y'_i$ &$y''_i$ &$y'''_i$ \\
		\multicolumn{3}{|c|}{} & & & & {$1\le i \le m$} & {$1\le i \le n$} & {$1\le i \le k$}\\[0.5ex]
		\hline 
		& & $x$ & & & & & & \\
		& & $x_0$ & & $\frac{1}{D}$ & & & & \\
		\raisebox{-1.0ex}{$\pi_1$} & \raisebox{-1.0ex}{$w$} & $x_1$ & & & $\frac{1}{D}$ & & & \\
		& & $x_i'\ (1\le i \le m)$ & & & & $\frac{1}{D}$ & & \\
		& & $x_i''\ (1\le i \le n)$ & & & & & $\frac{1}{D}$ & \\
		& & $x_i'''\ (1\le i \le k)$ & & & & & & $\frac{1}{D}$ \\[1.0ex]
		\hline
		
		& & $x$ & & & & $\frac{1}{D+m}$ & & \\
		& & $x_0$ & & $\frac{1}{D+m}$ & & & & \\
		\raisebox{-1.0ex}{$\pi_2$} & \raisebox{-1.0ex}{$w$} & $x_1$ & & & $\frac{1}{D+m}$ & & & \\
		& & $x_i'\ (1\le i \le m)$ & $\frac{1}{D+m}$ & & & & & \\
		& & $x_i''\ (1\le i \le n)$ & & & & & $\frac{1}{D+m}$ & \\
		& & $x_i'''\ (1\le i \le k)$ & & & & & & $\frac{1}{D+m}$ \\[1.0ex]
		\hline	
		
		& & $x$ & & & & $\frac{1}{D+m+n}$ & $\frac{1}{D+m+n}$ & \\
		& & $x_0$ & & $\frac{1}{D+m+n}$ & & & & \\
		\raisebox{-1.0ex}{$\pi_3$} & \raisebox{-1.0ex}{$w$} & $x_1$ & & & $\frac{1}{D+m+n}$ & & & \\
		& & $x_i'\ (1\le i \le m)$ & $\frac{1}{D+m+n}$ & & & & & \\
		& & $x_i''\ (1\le i \le n)$ & $\frac{1}{D+m+n}$ & & & & & \\
		& & $x_i'''\ (1\le i \le k)$ & & & & & & $\frac{1}{D+m+n}$ \\[1.0ex]
		\hline		
		
	\end{tabular}
	\caption{Transport plans $\pi_j$ evaluated (non-vanishingly) at pairs of vertices of $G$}
	\label{table:plans_123}
\end{table}

It is straightforward to check that $\pi_1\in\prod(\mu_x^0,\mu_y^0)$, $\pi_2\in\prod(\mu_x^{\frac{m}{D+m}},\mu_y^{\frac{m}{D+m}})$, and $\pi_3\in\prod(\mu_x^{\frac{m+n}{D+m+n}},\mu_y^{\frac{m+n}{D+m+n}})$, and therefore


\begin{align} \label{c1upperbound}
c_1=f_1(0)\le W_1(\mu_x^0,\mu_y^0) &\le \sum\limits_{(w,z)\in V^2} \pi_1(w,z)d(w,z) \nonumber \\
&=\frac{1}{D}\bigg(2+3m+2n+k\bigg). 
\end{align}

%

\begin{align*}
\frac{2m}{D+m}+\frac{D}{D+m}c_2=f_2(\frac{m}{D+m})
&\le W_1(\mu_x^\frac{m}{D+m},\mu_y^\frac{m}{D+m}) \\
&\le \sum\limits_{(w,z)\in V^2} \pi_2(w,z) d(w,z)\\
&=\frac{1}{D+m}\bigg(2+4m+2n+k\bigg).
\end{align*}
which implies
\begin{align} \label{c2upperbound}
c_2\le \frac{2m+2n+k+2}{D}.
\end{align}

%

\begin{align*}
\frac{3(m+n)}{D+m+n}+\frac{D}{D+m+n}c_3=f_3(\frac{m+n}{D+m+n})
&\le W_1(\mu_x^\frac{m+n}{D+m+n},\mu_y^\frac{m+n}{D+m+n}) \\
&\le \sum\limits_{(w,z)\in V^2} \pi_3(w,z) d(w,z)\\
&=\frac{1}{D+m+n}\bigg(2+4m+4n+k\bigg).
\end{align*}
which implies
\begin{align} \label{c3upperbound}
c_3\le \frac{m+n+k+2}{D}=1.
\end{align}

By comparing \eqref{c1lowerbound},\eqref{c2lowerbound},\eqref{c3lowerbound} to \eqref{c1upperbound},\eqref{c2upperbound},\eqref{c3upperbound}, we know that the exact values of $c_j$'s:

$$c_1= \frac{3m+2n+k+2}{D} \ ; \ c_2= \frac{2m+2n+k+2}{D} \ ; \ c_3= 1.$$

Lemma \ref{lemma:intersectionpt} then give a formula for $W_1(\mu_x^p,\mu_y^p)$ that
\begin{align*}
W_1(\mu_x^p,\mu_y^p)=
\begin{cases}
f_{1}(p) &\textup{ if } 0\le p\le p_1\\
f_{2}(p) &\textup{ if } p_1\le p\le p_2\\
f_{3}(p) &\textup{ if } p_2\le p\le 1.
\end{cases}
\end{align*}
where the critical points are
$$p_1=\frac{c_1-c_2}{c_1-c_2+1}=\frac{m}{D+m} \textup{ and } p_2=\frac{c_2-c_3}{c_2-c_3+1}=\frac{m+n}{D+m+n},$$
as we would like to verify.

\newpage

\section{The Cartesian product}\label{sect: cartesian}
In \cite{LLY11} the authors proved the following results on the curvature of Cartesian products of graphs:
\begin{theorem}[\cite{LLY11}]
Let  $G=(V_{G},E_{G})$ be a $d_{G}$-regular graph and $H=(V_{H},E_{H})$ be a $d_{H}$-regular graph. Let $x_{1},x_{2}\in V_{G}$ with $x_{1}\sim x_{2}$ and $y\in V_{H}$. Then
\begin{align*}
\kappa^{G\times H}_{LLY}((x_{1},y),(x_{2},y)) & = \frac{d_{G}}{d_{G}+d_{H}} \kappa^{G}(x_{1},x_{2}),
\\
\kappa^{G\times H}_{0}((x_{1},y),(x_{2},y)) & = \frac{d_{G}}{d_{G}+d_{H}} \kappa^{G}_{0}(x_{1},x_{2}).
\end{align*}
\end{theorem}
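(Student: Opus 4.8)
The plan is to reduce both identities to the single Wasserstein identity
\begin{equation*}
W_1^{G\times H}\bigl(\mu^{p}_{(x_1,y)},\mu^{p}_{(x_2,y)}\bigr)=\frac{d_G}{d_G+d_H}\,W_1^{G}\bigl(\mu^{p}_{x_1},\mu^{p}_{x_2}\bigr)+\frac{d_H}{d_G+d_H},
\end{equation*}
which, since $d_{G\times H}((x_1,y),(x_2,y))=d_G(x_1,x_2)=1$ and $G\times H$ is $(d_G+d_H)$-regular, is literally the assertion $\kappa^{G\times H}_{p}((x_1,y),(x_2,y))=\tfrac{d_G}{d_G+d_H}\kappa^{G}_{p}(x_1,x_2)$. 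The algebraic engine is the decomposition
\begin{equation*}
\mu^{p}_{(x_1,y)}=\frac{d_G}{d_G+d_H}\bigl(\mu^{p}_{x_1}\otimes\delta_y\bigr)+\frac{d_H}{d_G+d_H}\bigl(\delta_{x_1}\otimes\mu^{p}_{y}\bigr),
\end{equation*}
and likewise at $x_2$, which one checks in one line from the definition of the measures, using that $(x_1,y)$ has exactly $d_G$ neighbours $(u,y)$ and $d_H$ neighbours $(x_1,v)$, together with the product-metric identity $d_{G\times H}((u,v),(u',v'))=d_G(u,u')+d_H(v,v')$.

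First I would prove ``$\le$'' for \emph{every} $p\in[0,1]$. Feeding the decomposition into an explicit transport plan — couple the $\mu^{p}_{x_i}\otimes\delta_y$ parts by an optimal $G$-plan carried inside the slice $G\times\{y\}$ (an isometric copy of $G$, cost $W_1^{G}(\mu^{p}_{x_1},\mu^{p}_{x_2})$) and couple the $\delta_{x_i}\otimes\mu^{p}_{y}$ parts by the identity on $H$ together with the move $x_1\mapsto x_2$ (cost $1$ per unit mass, since $x_1\sim x_2$) — gives a plan of total cost $\tfrac{d_G}{d_G+d_H}W_1^{G}(\mu^{p}_{x_1},\mu^{p}_{x_2})+\tfrac{d_H}{d_G+d_H}$. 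Hence $\kappa^{G\times H}_{p}\ge\tfrac{d_G}{d_G+d_H}\kappa^{G}_{p}$ for all $p$.

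The reverse inequality is the crux, obtained via Kantorovich duality (Theorem \ref{Kantorovich}). Let $\phi$ be an optimal $G$-potential for $(\mu^{p}_{x_1},\mu^{p}_{x_2})$, and lift it to a function $\Psi$ on the vertex set of $G\times H$ by $\Psi(u,v):=\phi(u)$ for $v=y$ and $\Psi(u,v):=\widetilde\phi(u)$ for $v\ne y$, where
\begin{equation*}
\widetilde\phi(u):=\max\bigl(\phi(u)-1,\ \phi(x_1)+1-d_G(u,x_1),\ \phi(x_2)-1-d_G(u,x_2)\bigr).
\end{equation*}
A short verification shows $\widetilde\phi$ is $1$-Lipschitz on $G$, that $\|\widetilde\phi-\phi\|_\infty\le1$ (so $\Psi$ is $1$-Lipschitz across the $y$-layer and hence on $G\times H$), and — the whole point — that $\widetilde\phi(x_1)-\widetilde\phi(x_2)=1$ no matter what $\phi(x_1)-\phi(x_2)$ is. Pairing $\Psi$ with $\mu^{p}_{(x_1,y)}-\mu^{p}_{(x_2,y)}$ through the decomposition, and using that $\Psi(\cdot,y)=\phi$ is $G$-optimal while $\Psi(x_1,v)-\Psi(x_2,v)=1$ for $v\ne y$, gives
\begin{equation*}
W_1^{G\times H}\ge\frac{d_G}{d_G+d_H}W_1^{G}\bigl(\mu^{p}_{x_1},\mu^{p}_{x_2}\bigr)+\frac{d_H}{d_G+d_H}\Bigl(p\bigl(\phi(x_1)-\phi(x_2)\bigr)+(1-p)\Bigr),
\end{equation*}
since $\mu^{p}_{y}$ has mass $p$ at $y$ and total mass $1$. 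For $p=0$ the last parenthesis equals $1$ regardless of $\phi$, so ``$\ge$'' meets ``$\le$'' and the Wasserstein identity holds at $p=0$, giving $\kappa^{G\times H}_{0}=\tfrac{d_G}{d_G+d_H}\kappa^{G}_{0}$. For $p>\tfrac12$, the complementary-slackness argument of Lemma \ref{lemma:d-2} in the short-scale case $\delta=1$ (cf. \cite[Thm.~5.2]{BCLMP17} and Remark \ref{remark: last half linear}) forces $\phi(x_1)-\phi(x_2)=1$ for every optimal $\phi$, so again the parenthesis is $1$ and $\kappa^{G\times H}_{p}=\tfrac{d_G}{d_G+d_H}\kappa^{G}_{p}$ on $(\tfrac12,1)$; dividing by $1-p$ at any such $p$ and invoking Corollary \ref{cor:LLY} on both graphs yields the $\kappa_{LLY}$ identity.

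The main obstacle is precisely the reverse inequality at $p=0$: an optimal $p=0$ potential $\phi$ on $G$ with $\phi(x_1)-\phi(x_2)=1$ need not exist — on $C_4$ with $x_1\sim x_2$, every optimal one has $\phi(x_1)-\phi(x_2)=-1$ — so $\phi$ cannot be transplanted naively. The explicit $\widetilde\phi$ above repairs this by raising the potential one unit ``in the $H$-direction'' while staying within Lipschitz distance $1$ of $\phi$, and the reason this costs nothing at $p=0$ is that $\mu^{0}_{y}$ puts no mass on $y$. (The scheme uses only that $x_1,x_2$ have degree $d_G$ and $y$ has degree $d_H$; full regularity is what makes $G\times H$ regular and keeps the statement clean.)
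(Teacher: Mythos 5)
Your argument is correct, but be aware that the paper does not prove this statement at all: it is quoted from \cite{LLY11}, and the only Cartesian-product proof the paper supplies is for its long-scale generalisation, Theorem \ref{thm:LLY_cart}, whose method is genuinely different from yours. That proof bounds $W_1(\mu^p_{(x_1,y_1)},\mu^p_{(x_2,y_2)})$ from below by summing optimal potentials $\Phi^{\lambda}_G(w)+\Phi^{\lambda'}_H(z)$ at the reparametrised idlenesses $\lambda=\frac{pD_G+D_H}{D_G+D_H}$, $\lambda'=\frac{D_G+pD_H}{D_G+D_H}$, and from above by the triangle inequality through $\mu^p_{(x_2,y_1)}$ together with explicit slice-wise plans; the two bounds are only shown to meet when $p,\lambda,\lambda',p/\lambda,p/\lambda'\ge\frac12$, so that route delivers the $\kappa_{LLY}$ identity but is silent at $p=0$. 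Your route instead keeps the idleness fixed, uses the exact convex decomposition $\mu^p_{(x_1,y)}=\frac{d_G}{d_G+d_H}\,\mu^p_{x_1}\!\otimes\delta_y+\frac{d_H}{d_G+d_H}\,\delta_{x_1}\!\otimes\mu^p_y$ (which I checked on all three classes of vertices), and repairs the dual potential across the $y$-layer via $\widetilde\phi$. The key computations hold: $\widetilde\phi(x_1)=\phi(x_1)+1$ and $\widetilde\phi(x_2)=\max\bigl(\phi(x_2)-1,\phi(x_1)\bigr)=\phi(x_1)$, so $\widetilde\phi(x_1)-\widetilde\phi(x_2)=1$ for \emph{every} optimal $\phi$, and the cross-layer Lipschitz bound follows from $\|\widetilde\phi-\phi\|_\infty\le1\le d_H(y,v')$; the resulting dual lower bound $\frac{d_G}{d_G+d_H}W_1^G+\frac{d_H}{d_G+d_H}\bigl(p(\phi(x_1)-\phi(x_2))+(1-p)\bigr)$ matches the primal upper bound exactly at $p=0$ and, by the complementary-slackness argument you cite, for $p>\frac12$, after which Corollary \ref{cor:LLY} gives the $\kappa_{LLY}$ statement. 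Your $C_4$ example correctly identifies why the naive transplant fails at $p=0$. What each approach buys: yours covers $p=0$ (which the paper's method cannot) but its decomposition is special to $y_1=y_2$, since it factors one coordinate as a point mass; the paper's handles arbitrary $y_1\neq y_2$ at the cost of working only on $[\frac12,1)$.
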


We now extend this result to long scale curvature.

\begin{theorem} \label{thm:LLY_cart}
Let  $G=(V_{G},E_{G})$ be a $D_{G}$-regular graph and $H=(V_{H},E_{H})$ be a $D_{H}$-regular graph. Let $x_{1},x_{2}\in V_{G}$ and $y_{1}, y_{2}\in V_{H}$. Then
\begin{align*}
\kappa^{G\times H}_{LLY}((x_{1},y_{1}),(x_{2},y_{2})) & = \frac{D_{G}d(x_{1},x_{2})\kappa^{G}_{LLY}(x_{1},x_{2})+D_{H}d(y_{1},y_{2})\kappa^{H}_{LLY}(y_{1},y_{2})}{(D_{G}+D_{H})(d(x_{1},x_{2})+d(y_{1},y_{2}))}.
\end{align*}

Furthermore, for all $p\in [\frac{1}{2},1)$, we have 
\begin{align*}
\kappa^{G\times H}_{p}((x_{1},y_{1}),(x_{2},y_{2})) & = \frac{D_{G}d(x_{1},x_{2})\kappa^{G}_{p}(x_{1},x_{2})+D_{H}d(y_{1},y_{2})\kappa^{H}_{p}(y_{1},y_{2})}{(D_{G}+D_{H})(d(x_{1},x_{2})+d(y_{1},y_{2}))}.
\end{align*}
\end{theorem}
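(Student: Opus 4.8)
The plan is to reduce the statement about $G\times H$ to the one-variable results already established in Sections~\ref{sect: 3 linear} and~\ref{sect: critical points}, by exhibiting a near-optimal transport plan (or, dually, a good Kantorovich potential) on the product built from optimal ones on the two factors. Write $\delta_G=d(x_1,x_2)$, $\delta_H=d(y_1,y_2)$, so that $d((x_1,y_1),(x_2,y_2))=\delta_G+\delta_H$ in the Cartesian product, and observe that the product is $(D_G+D_H)$-regular. The measure $\mu^p_{(x_1,y_1)}$ on $V_G\times V_H$ splits naturally: with probability $\tfrac{D_G(1-p)}{D_G+D_H}$ it moves in the $G$-direction, with probability $\tfrac{D_H(1-p)}{D_G+D_H}$ it moves in the $H$-direction, and with probability $p$ it stays. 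The key algebraic identity to set up first is that, after rescaling idleness appropriately, $\mu^p_{(x_1,y_1)}$ can be written as a convex combination of $\mu^{q_G}_{x_1}\otimes\delta_{y_1}$-type pieces and $\delta_{x_1}\otimes\mu^{q_H}_{y_1}$-type pieces; I would compute the exact mixing weights and the induced idleness parameters $q_G,q_H$ so that the $G$-part looks like a lazy walk on $G$ with idleness $q_G$ and similarly for $H$.

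Next I would prove the lower bound $W_1^{G\times H}(\mu^p_{(x_1,y_1)},\mu^p_{(x_2,y_2)})\ge \text{(target)}$ via Kantorovich duality (Theorem~\ref{Kantorovich}): take optimal $1$-Lipschitz potentials $\phi_G$ on $G$ transporting $\mu^{q_G}_{x_1}$ to $\mu^{q_G}_{x_2}$ and $\phi_H$ on $H$, and form $\Phi(a,b):=\phi_G(a)+\phi_H(b)$, which is $1$-Lipschitz on the $\ell^1$-product metric. Evaluating $\sum \Phi\,(\mu^p_{(x_1,y_1)}-\mu^p_{(x_2,y_2)})$ and using the splitting identity above yields a weighted sum of the factor Wasserstein distances; here the hypothesis $p\ge \tfrac12$ enters crucially, because Remark~\ref{remark: last half linear}/Corollary~\ref{cor:LLY} tell us the factor idleness functions are already on their last linear piece, so $W_1^G(\mu^{q_G}_{x_1},\mu^{q_G}_{x_2})=\delta_G(1-q_G)(1-\kappa^G_{LLY})$ with a clean linear dependence — and I need to check that $q_G,q_H\ge\tfrac12$ as well, which should follow from the mixing weights when $p\ge\tfrac12$. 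For the matching upper bound I would build the product transport plan $\pi_G\otimes(\text{stay in }H)$ mixed with $(\text{stay in }G)\otimes\pi_H$ in the same proportions, and verify its marginals are $\mu^p_{(x_1,y_1)}$ and $\mu^p_{(x_2,y_2)}$; its cost is exactly the same weighted sum, using additivity of the product metric. Combining the two bounds gives the formula for $W_1^{G\times H}$, hence for $\kappa_p^{G\times H}$, and letting $p\to1$ (after dividing by $1-p$) gives the Lin--Lu--Yau statement.

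The main obstacle I anticipate is the bookkeeping in the splitting identity: one must pick $q_G$ and $q_H$ (and the two mixing weights) so that simultaneously (i) the convex combination reproduces $\mu^p$ exactly on every vertex type of the product, (ii) the resulting one-factor measures are genuinely of the form $\mu^{q}$ with $q\in[\tfrac12,1)$ whenever $p\in[\tfrac12,1)$, and (iii) the arithmetic collapses to the advertised expression with denominator $(D_G+D_H)(\delta_G+\delta_H)$. A secondary subtlety is justifying that $\Phi=\phi_G+\phi_H$ is optimal and not merely a valid dual feasible point; this is exactly what the upper bound via the explicit product plan takes care of, so the two halves must be run in tandem. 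Once the weights are correct, everything else is the routine linear algebra of combining $f_\delta$-type linear pieces, and the concavity/piecewise-linearity machinery of Theorem~\ref{thm:3linear} guarantees we are not missing a corner in the range $p\in[\tfrac12,1)$.
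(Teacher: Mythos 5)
Your proposal follows essentially the same route as the paper's proof: a lower bound via the sum $\Phi=\phi_G+\phi_H$ of factor Kantorovich potentials combined with a splitting of $\mu^p_{(x_1,y_1)}$ into factor measures with rescaled idleness, an upper bound via a transport plan assembled from the factor plans, and the matching of the two bounds through the last-linear-piece property of the idleness function for $p\ge\tfrac12$ (Remark~\ref{remark: last half linear}, Corollary~\ref{cor:LLY}); the only cosmetic difference is that the paper routes its upper bound through the triangle inequality via the intermediate point $(x_2,y_1)$ and reconciles the two idleness reparametrisations $\lambda$ and $p/\lambda'$ in a separate step. One small slip to fix when writing this up: on the last linear piece one has $W_1^G(\mu^{q}_{x_1},\mu^{q}_{x_2})=\delta_G\bigl(1-(1-q)\kappa^G_{LLY}\bigr)$, not $\delta_G(1-q)\bigl(1-\kappa^G_{LLY}\bigr)$.
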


Here we use the notation $D_G,D_H$, instead of $d_G,d_H$ for the degree to distinguish them from the distance function $d(\cdot,\cdot)$. Moreover, we use convention $d(x_1,x_2)K_{LLY}^G(x_1,x_2)=0$ in case $x_1=x_2$, and $d(y_1,y_2)K_{LLY}^H(y_1,y_2)=0$ in case $y_1=y_2$. Before proving the theorem, we introduce a lemma stating that the sum of 1-Lipschitz functions on two different graphs is a 1-Lipschitz function on the cartesian-product graph. 

\begin{lemma} \label{lemma:cart_lip}
Let $G=(V_{G},E_{G})$ and $H=(V_{H},E_{H})$ be two locally finite graphs. Suppose $\phi_G:V_G\rightarrow \mathbb{R}$ and $\phi_H:V_H\rightarrow \mathbb{R}$ are 1-Lipschitz functions on $G$ and $H$, respectively. Then the function $\phi:V_{G\times H} \rightarrow \mathbb{R}$ defined by 
$$\phi((w,z)):=\phi_G(w)+\phi_H(z) \textup{ for all } w\in V_G, z\in V_H$$
is also 1-Lipschitz function on the cartesian product $G\times H$.
\end{lemma}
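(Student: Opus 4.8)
The plan is to verify the defining Lipschitz inequality directly from the metric structure of the Cartesian product. Recall that in $G\times H$ the distance between two vertices $(w_1,z_1)$ and $(w_2,z_2)$ is additive: $d_{G\times H}((w_1,z_1),(w_2,z_2)) = d_G(w_1,w_2) + d_H(z_1,z_2)$. This is the only nontrivial structural fact I will need, and it is standard; if the paper wants to be self-contained one could prove it by noting that any path in $G\times H$ projects to paths in $G$ and $H$ whose lengths sum to the length of the original path, giving the $\ge$ direction, while concatenating geodesics in each factor (moving first within $G$, then within $H$) gives a path realizing the sum, hence the $\le$ direction.

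Granting additivity of the distance, the computation is a two-line estimate. First I would take arbitrary vertices $(w_1,z_1), (w_2,z_2) \in V_{G\times H}$ and write
\begin{align*}
|\phi((w_1,z_1)) - \phi((w_2,z_2))|
&= |\phi_G(w_1) + \phi_H(z_1) - \phi_G(w_2) - \phi_H(z_2)| \\
&\le |\phi_G(w_1) - \phi_G(w_2)| + |\phi_H(z_1) - \phi_H(z_2)|
\end{align*}
by the triangle inequality for the absolute value. Then I apply the 1-Lipschitz hypotheses on $\phi_G$ and $\phi_H$ separately to bound these two terms by $d_G(w_1,w_2)$ and $d_H(z_1,z_2)$ respectively, and finally invoke additivity of the distance to recognize the sum as $d_{G\times H}((w_1,z_1),(w_2,z_2))$. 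This closes the argument.

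The main (and really only) obstacle is the distance-additivity fact for Cartesian products; everything else is the triangle inequality applied twice. Since this is a well-known property of the Cartesian product of graphs, I expect to state it with a brief justification rather than a formal lemma, and the rest of the proof is immediate.

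\begin{proof}
Recall that the Cartesian product $G\times H$ has vertex set $V_G\times V_H$, with $(w_1,z_1)\sim (w_2,z_2)$ if and only if either $w_1=w_2$ and $z_1\sim z_2$ in $H$, or $z_1=z_2$ and $w_1\sim w_2$ in $G$. It is a standard fact that the graph distance is then additive:
$$d_{G\times H}((w_1,z_1),(w_2,z_2)) = d_G(w_1,w_2) + d_H(z_1,z_2).$$
Indeed, any path from $(w_1,z_1)$ to $(w_2,z_2)$ in $G\times H$ projects onto a walk from $w_1$ to $w_2$ in $G$ and a walk from $z_1$ to $z_2$ in $H$, and each edge of the path contributes to exactly one of these two projected walks; hence its length is at least $d_G(w_1,w_2)+d_H(z_1,z_2)$. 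Conversely, concatenating a geodesic from $w_1$ to $w_2$ in $G$ (taken in the slice $V_G\times\{z_1\}$) with a geodesic from $z_1$ to $z_2$ in $H$ (taken in the slice $\{w_2\}\times V_H$) yields a path of exactly that length.

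Now let $(w_1,z_1),(w_2,z_2)\in V_{G\times H}$ be arbitrary. Using the triangle inequality for the absolute value and then the 1-Lipschitz property of $\phi_G$ and $\phi_H$,
\begin{align*}
|\phi((w_1,z_1)) - \phi((w_2,z_2))|
&= |(\phi_G(w_1)+\phi_H(z_1)) - (\phi_G(w_2)+\phi_H(z_2))| \\
&\le |\phi_G(w_1)-\phi_G(w_2)| + |\phi_H(z_1)-\phi_H(z_2)| \\
&\le d_G(w_1,w_2) + d_H(z_1,z_2) \\
&= d_{G\times H}((w_1,z_1),(w_2,z_2)).
\end{align*}
Since $(w_1,z_1)$ and $(w_2,z_2)$ were arbitrary, $\phi$ is 1-Lipschitz on $G\times H$.
\end{proof}
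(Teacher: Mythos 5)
Your proof is correct and follows essentially the same route as the paper's: bound the difference $\phi_G(w_1)+\phi_H(z_1)-\phi_G(w_2)-\phi_H(z_2)$ using the 1-Lipschitz hypotheses on each factor and the additivity of the Cartesian-product distance. The only difference is that you spell out the justification of the distance-additivity fact, which the paper uses silently.
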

\begin{proof}[Proof of Lemma \ref{lemma:cart_lip}]
Let $w_1,w_2\in V_G$ and $z_1,z_2\in V_H$. By applying 1-Lipschitz properties of $\phi_G$ and $\phi_H$, we obtain
\begin{align*}
\phi((w_1,z_1))-\phi((w_2,z_2)) &= \phi_G(w_1)+\phi_H(z_1) - \phi_G(w_2)-\phi_H(z_2) \\ 
&\le d(w_1,w_2) +d(z_1,z_2) \\ &= d((w_1,z_1),(w_2,z_2))
\end{align*}
yielding the lemma.
\end{proof}

\begin{proof}[Proof of Theorem \ref{thm:LLY_cart}] 
For idleness $p\in[0,1]$, define idleness $\lambda,\lambda'\in[0,1]$ to be 
\begin{align} \label{eqn:choose_idleness}
\lambda=\frac{pD_G+D_H}{D_G+D_H} \textup{ and } \lambda'=\frac{D_G+pD_H}{D_G+D_H}.
\end{align}
The proof includes the four following steps:
\begin{itemize}
\item[$(1)$] Show that
$\displaystyle W_1(\mu^p_{(x_1,y_1)},\mu^p_{(x_2,y_2)}) 
\ge W_1(\mu_{x_1}^{\lambda},\mu_{x_2}^{\lambda})+ W_1(\mu_{y_1}^{\lambda'},\mu_{y_2}^{\lambda'}).$
\item[$(2)$] Show that
\begin{align*}
W_1(\mu^p_{(x_1,y_1)},\mu^p_{(x_2,y_2)}) 
&\le \lambda'W_1(\mu_{x_1}^{p/{\lambda'}},\mu_{x_2}^{p/{\lambda'}})+(1-\lambda')d(x_1,x_2)+\\ & \ \ \ \ \lambda W_1(\mu_{y_1}^{p/{\lambda}},\mu_{y_2}^{p/{\lambda}})+(1-\lambda)d(y_1,y_2)
\end{align*}
\item[$(3)$] Show that the lower bound and upper bound of $\displaystyle W_1(\mu^p_{(x_1,y_1)},\mu^p_{(x_2,y_2)})$ given in $(1)$ and $(2)$ coincides for large enough $p\in[0,1]$. Hence, the inequality in Step (1) is indeed an equality (for large $p$).
\item[$(4)$] Derive the Lin-Lu-Yau curvature on the Cartesian product.
\end{itemize}

\underline{Step (1)} By Kantorovich Duality,
\begin{align*}
W_1(\mu^p_{(x_1,y_1)},\mu^p_{(x_2,y_2)})=\sup\limits_{\substack{\phi:V_{G\times H} \rightarrow\mathbb{R} \\ \phi\in \textup{1-Lip}}} \sum\limits_{\substack{w\in V_G \\ z\in V_H}} \phi((w,z)) \Big(\mu^p_{(x_1,y_1)}((w,z))-\mu^p_{(x_2,y_2)}((w,z))\Big)
\end{align*}

For each idleness $p\in [0,1]$, let $\Phi^{p}_G$ and $\Phi^{p}_H$ be optimal Kantorovich potentials transporting $\mu_{x_1}^p$ to $\mu_{x_2}^p$, and $\mu_{y_1}^p$ to $\mu_{y_2}^p$, respectively.
By Lemma \ref{lemma:cart_lip}, the function $\Phi((w,z)):=\Phi^{\lambda}_G(w)+\Phi^{\lambda'}_H(z)$ is 1-Lipschitz on $G\times H$, so it follows that
\begin{align} \label{eqn:sum_cart}
W_1(\mu^p_{(x_1,y_1)},\mu^p_{(x_2,y_2)})
&\ge \sum\limits_{(w,z)} \Big(\Phi^{\lambda}_G(w)+\Phi^{\lambda'}_H(z)\Big)\Big(\mu^p_{(x_1,y_1)}((w,z))-\mu^p_{(x_2,y_2)}((w,z))\Big).
\end{align}

The idea is to decompose a measure in cartesian product into a sum of measures in its coordinates. Consider characteristic equations of $\mu_{x_1}^{\lambda}$ and $\mu_{y_1}^{\lambda'}$ :
\begin{align*}
\mu_{x_1}^{\lambda}(w)=\lambda \mathbbm{1}_{x_1}(w) + \frac{(1-\lambda)}{D_G}\mathbbm{1}_{N(x_1)}(w),
\end{align*}
or equivalently
\begin{align} \label{eqn:char_lambda}
\mathbbm{1}_{N(x_1)}(w)=\frac{D_G}{1-\lambda}\Big(\mu_{x_1}^{\lambda}(w)-\lambda\mathbbm{1}_{x_1}(w)\Big).
\end{align}
Similarly, 
\begin{align} \label{eqn:char_lambda'}
\mathbbm{1}_{N(y_1)}(z)=\frac{D_H}{1-\lambda'}\Big(\mu_{y_1}^{\lambda'}(z)-\lambda'\mathbbm{1}_{y_1}(z)\Big).
\end{align}

Substitute \eqref{eqn:char_lambda} and \eqref{eqn:char_lambda'} into the characteristic equation of $\mu^p_{(x_1,y_1)}((w,z))$:
\begin{align*}
\mu^p_{(x_1,y_1)}((w,z)) &= p\mathbbm{1}_{x_1}(w)\mathbbm{1}_{y_1}(z)+\frac{1-p}{D_g+D_H}\Big(\mathbbm{1}_{N(x_1)}(w)\mathbbm{1}_{y_1}(z)+\mathbbm{1}_{x_1}(w)\mathbbm{1}_{N(y_1)}(z)\Big)\\
&=p\mathbbm{1}_{x_1}(w)\mathbbm{1}_{y_1}(z)+
K\Big(\mu_{x_1}^{\lambda}(w)-\lambda\mathbbm{1}_{x_1}(w)\Big)\mathbbm{1}_{y_1}(z) + \\
&\ \ \ \ K'\Big(\mu_{y_1}^{\lambda'}(z)-\lambda'\mathbbm{1}_{y_1}(z)\Big)\mathbbm{1}_{x_1}(w)\\
&=(p-K\lambda-K'\lambda')\mathbbm{1}_{x_1}(w)\mathbbm{1}_{y_1}(z)+K\mu_{x_1}^{\lambda}(w)\mathbbm{1}_{y_1}(z)+K'\mu_{y_1}^{\lambda'}(z)\mathbbm{1}_{x_1}(w)
\end{align*}
where constants $K=\frac{(1-p)D_G}{(1-\lambda)(D_G+D_H)}$ and $K'=\frac{(1-p)D_H}{(1-\lambda')(D_G+D_H)}$.

It follows that
\begin{align*}
\sum\limits_{(w,z)}\Phi^{\lambda}_G(w)\mu^p_{(x_1,y_1)}((w,z))
&=(p-K\lambda-K'\lambda')\Phi^{\lambda}_G(x_1)+K\sum\limits_{w}\Phi^{\lambda}_G(w)\mu_{x_1}^{\lambda}(w)+K'\Phi^{\lambda}_G(x_1)\\
&=(p-K\lambda-K'\lambda'+K')\Phi^{\lambda}_G(x_1)+K\sum\limits_{w}\Phi^{\lambda}_G(w)\mu_{x_1}^{\lambda}(w)
\end{align*}

With particular choice of $\lambda,\lambda'$ as in \eqref{eqn:choose_idleness}, we have $K=K'=1$ and $p-K\lambda-K'\lambda'+K'=0$. The equation above simply turns into
\begin{align*}
\sum\limits_{(w,z)}\Phi^{\lambda}_G(w)\mu^p_{(x_1,y_1)}((w,z))
=\sum\limits_{w}\Phi^{\lambda}_G(w)\mu_{x_1}^{\lambda}(w)
\end{align*}
Similarly, the same equation holds when subindex $1$ is replaced by $2$ everywhere, and therefore
\begin{align} \label{eqn:phi_G}
\sum\limits_{(w,z)}\Phi^{\lambda}_G(w)\Big(\mu^p_{(x_1,y_1)}((w,z))-\mu^p_{(x_2,y_2)}((w,z))\Big)
&=\sum\limits_{w}\Phi^{\lambda}_G(w)\mu_{x_1}^{\lambda}(w)-\sum\limits_{w}\Phi^{\lambda}_G(w)\mu_{x_2}^{\lambda}(w)  \nonumber \\
&=W_1(\mu_{x_1}^{\lambda},\mu_{x_2}^{\lambda})
\end{align}

By similar arguments, we also have
\begin{align} \label{eqn:phi_H}
\sum\limits_{(w,z)}\Phi^{\lambda}_H(z)\Big(\mu^p_{(x_1,y_1)}((w,z))-\mu^p_{(x_2,y_2)}((w,z))\Big)=W_1(\mu_{y_1}^{\lambda'},\mu_{y_2}^{\lambda'})
\end{align}

Combining \eqref{eqn:phi_G} and \eqref{eqn:phi_H} into \eqref{eqn:sum_cart}, we obtain
\begin{align*}
W_1(\mu^p_{(x_1,y_1)},\mu^p_{(x_2,y_2)}) 
&\ge W_1(\mu_{x_1}^{\lambda},\mu_{x_2}^{\lambda})+ W_1(\mu_{y_1}^{\lambda'},\mu_{y_2}^{\lambda'}).
\end{align*}

\underline{Step (2)}
By metric property of $W_1$,
\begin{align} \label{eqn:metric_cart}
W_1(\mu^p_{(x_1,y_1)},\mu^p_{(x_2,y_2)}) \le W_1(\mu^p_{(x_1,y_1)},\mu^p_{(x_2,y_1)})+ W_1(\mu^p_{(x_2,y_1)},\mu^p_{(x_2,y_2)})
\end{align}

For each $i\in \{1,2\}$, the measure $\mu^{p/{\lambda'}}_{x_i}$ satisfies
\begin{align*}
\lambda'\cdot \mu^{p/{\lambda'}}_{x_i} (w) = 
\begin{cases}
p &\textup{ if } w=x_i\\
\lambda'\left(\frac{1-\frac{p}{\lambda'}}{D_G}\right)=\frac{1-p}{D_G+D_H} &\textup{ if } w\sim x_i\\
0 &\textup{ otherwise.}
\end{cases}
\end{align*}

Let $\pi_x\in\prod(\mu^{p/{\lambda'}}_{x_1},\mu^{p/{\lambda'}}_{x_2})$ be an optimal transport plan from $\mu^{p/{\lambda'}}_{x_1}$ to $\mu^{p/{\lambda'}}_{x_2}$.
Consider a transport plan $\pi\in \prod(\mu^p_{(x_1,y_1)},\mu^p_{(x_2,y_1)})$ given by 
\begin{align*}
\pi \big((x_1,z),(x_2,z) \big) &= \frac{1-p}{D_G+D_H} \ \ \ \ \textup{ for all } z\stackrel{H}{\sim} y\\
\pi \big((w_1, y_1),(w_2,y_1)\big)&=\lambda'\pi_x(w_1,w_2) \ \textup{ for all } w_1,w_2\in V_G \\
\pi \big((w_1, z_1),(w_2,z_2)\big)&=0 \ \ \ \ \ \ \ \ \ \ \ \ \ \ \ \ \textup{ everywhere else.}
\end{align*}

The cost of plan $\pi$ then give an upper bound for $W_1(\mu^p_{(x_1,y_1)},\mu^p_{(x_2,y_1)})$:
\begin{align} \label{eqn:plan_pi_x}
W_1(\mu^p_{(x_1,y_1)},\mu^p_{(x_2,y_1)})
&\le \sum\limits_{(w_1,z_1),(w_2,z_2)} \pi\big((w_1,z_1),(w_2,z_2)\big)d \big((w_1,z_1),(w_2,z_2)\big)\nonumber \\
&=\frac{(1-p)D_H}{D_G+D_H}d(x_1,x_2)+\lambda'W_1(\mu^{p/{\lambda'}}_{x_1},\mu^{p/{\lambda'}}_{x_2}) \nonumber \\
&=(1-\lambda')d(x_1,x_2)+\lambda'W_1(\mu^{p/{\lambda'}}_{x_1},\mu^{p/{\lambda'}}_{x_2}).
\end{align}

By similar arguments, we can derive
\begin{align} \label{eqn:plan_pi_y}
W_1(\mu^p_{(x_1,y_1)},\mu^p_{(x_2,y_1)})
\le (1-\lambda)d(y_1,y_2)+\lambda W_1(\mu^{p/{\lambda}}_{y_1},\mu^{p/{\lambda}}_{y_2}).
\end{align}

Combining \eqref{eqn:plan_pi_x} and \eqref{eqn:plan_pi_y} into \eqref{eqn:metric_cart} results in
\begin{align*}
W_1(\mu^p_{(x_1,y_1)},\mu^p_{(x_2,y_2)}) 
&\le \lambda'W_1(\mu_{x_1}^{p/{\lambda'}},\mu_{x_2}^{p/{\lambda'}})+(1-\lambda')d(x_1,x_2)+\\ & \ \ \ \ \lambda W_1(\mu_{y_1}^{p/{\lambda}},\mu_{y_2}^{p/{\lambda}})+(1-\lambda)d(y_1,y_2).
\end{align*}

\underline{Step (3)} The equation 
\begin{align} \label{eqn:compare_lambda_x}
W_1(\mu_{x_1}^{\lambda},\mu_{x_2}^{\lambda})=\lambda'W_1(\mu_{x_1}^{p/{\lambda'}},\mu_{x_2}^{p/{\lambda'}})+(1-\lambda')d(x_1,x_2)
\end{align}
does not hold true in general. However, we will show that it is true for all $p\in[0,1]$ large enough. Recall that $$\lambda=\frac{pD_G+D_H}{D_G+D_H} \textup{ and } \frac{p}{\lambda'}=\frac{pD_G+pD_H}{D_G+pD_H}$$ which are both increasing functions of $p\in[0,1]$ and reach value 1 when $p=1$. We consider $p$ large enough so that $\lambda,\frac{p}{\lambda'} \ge \frac{1}{2}$.
Recall Lemma \ref{lemma:intersectionpt} that for all idleness $q\in [\frac{1}{2},1]$, $$W_1(\mu_{x_1}^q, \mu_{x_2}^q) =f_{\delta}(q):=q\cdot \delta+(1-q)c_{\delta}$$ where $\delta=d(x_1,x_2).$
In other words, $$\frac{W_1(\mu_{x_1}^q, \mu_{x_2}^q)-q\cdot \delta}{1-q} = c_{\delta}=\frac{W_1(\mu_{x_1}^r, \mu_{x_2}^r)-r\cdot \delta}{1-r}$$ for all idleness $q,r\in [\frac{1}{2},1]$.

Substitution $q=\lambda$ and $r=\frac{p}{\lambda'}$ gives
\begin{align*}
W_1(\mu_{x_1}^{\lambda}, \mu_{x_2}^{\lambda}) 
&=\lambda\cdot \delta + (1-\lambda)\bigg(\frac{W_1(\mu_{x_1}^{p/\lambda'}, \mu_{x_2}^{p/\lambda'})-\frac{p}{\lambda'}\cdot \delta}{1-\frac{p}{\lambda'}}\bigg)\\
&=\lambda'W_1(\mu_{x_1}^{p/\lambda'}, \mu_{x_2}^{p/\lambda'})+(1-\lambda')\delta
\end{align*}
where the second line using the identity $1-\lambda=\lambda'-p$, yielding the equation \eqref{eqn:compare_lambda_x}.

Similarly, for large $p$ such that $\lambda',\frac{p}{\lambda}\ge \frac{1}{2}$, we also have
\begin{align} \label{eqn:compare_lambda_y}
W_1(\mu_{y_1}^{\lambda'},\mu_{y_2}^{\lambda'})=\lambda W_1(\mu_{y_1}^{p/{\lambda}},\mu_{y_2}^{p/{\lambda}})+(1-\lambda)d(y_1,y_2)
\end{align}

We can then conclude that, when $p\in[0,1]$ is large enough (so that $p,\lambda,\lambda',\frac{p}{\lambda}, \frac{p}{\lambda'} \ge \frac{1}{2}$), the lower bound and upper bound of $W_1(\mu^p_{(x_1,y_1)},\mu^p_{(x_2,y_2)})$ agree. 

\underline{Step (4)}
The previous steps implies that, for large enough $p\in[0,1]$, $$W_1(\mu^p_{(x_1,y_1)},\mu^p_{(x_2,y_2)})=W_1(\mu_{x_1}^{\lambda},\mu_{x_2}^{\lambda})+ W_1(\mu_{y_1}^{\lambda'},\mu_{y_2}^{\lambda'}).$$

Finally, we can translate this relation in term of Lin-Lu-Yau curvature, using Corollary \ref{cor:LLY}: for any $a_1\not=a_2\in A$ and any $q\in [\frac{1}{2},1]$,
$$\kappa_{LLY}^A(a_1,a_2)\stackrel{\textup{Cor } \ref{cor:LLY}}{=}\frac{\kappa_q(a_1,a_2)}{1-q}= \frac{1}{1-q}\Big(1-\frac{W_1(\mu_{a_1}^{q}, \mu_{a_1}^{q})}{d(a_1,a_2)}\Big).$$

For abbreviation, $\kappa_{LLY}^{G\times H}=\kappa_{LLY}^{G\times H}((x_1,y_1),(x_2,y_2))$ and  $\kappa_{LLY}^{G}=\kappa_{LLY}^{G}(x_1,x_2)$ and $\kappa_{LLY}^{H}=\kappa_{LLY}^{H}(y_1,y_2)$
\begin{align*}
\kappa_{LLY}^{G\times H}&=\frac{1}{1-p}\Big(1-\frac{W_1(\mu_{x_1}^{\lambda},\mu_{x_2}^{\lambda})+ W_1(\mu_{y_1}^{\lambda'},\mu_{y_2}^{\lambda'})}{d(x_1,x_2)+d(y_1,y_2)}\Big) \\
&=\frac{1}{1-p}\Big(1-\frac{d(x_1,x_2)\cdot(1-(1-\lambda)\kappa_{LLY}^G)+ d(y_1,y_2)\cdot(1-(1-\lambda')\kappa_{LLY}^H)}{d(x_1,x_2)+d(y_1,y_2)}\Big)\\
&=\frac{d(x_{1},x_{2})(1-\lambda)\kappa^{G}_{LLY}+d(y_{1},y_{2})(1-\lambda')\kappa^{H}_{LLY}}{(1-p)(d(x_{1},x_{2})+d(y_{1},y_{2}))}\\
&=\frac{d(x_{1},x_{2})D_{G}\kappa^{G}_{LLY}+d(y_{1},y_{2})D_{H}\kappa^{H}_{LLY}}{(D_{G}+D_{H})(d(x_{1},x_{2})+d(y_{1},y_{2}))}.
\end{align*}
Furthermore, for all $p\in [\frac{1}{2},1)$, Corollary \ref{cor:LLY}: $\kappa_p=(1-p)\kappa_{LLY}$ allows us to replace $\kappa_{LLY}$ by $\kappa_p$ in the above equation, which completes the proof. 
\end{proof}

\section{Long scale behaviour} \label{sect:  behavior}

In most papers regarding Ollivier-Ricci curvature, only the short scale curvature is usually concerned because the curvature given at an edge $x\sim y$ is a discrete analogue to the Ricci curvature given at a unit tangent vector. Moreover, a lower bound on the short scale curvature implies the same lower bound for the curvature between any two points (see \cite[Proposition]{Oll}):
$$\textup{If } \kappa_p(x,y)\ge \kappa \textup{ for all }x\sim y, \textup{then } \kappa_p(x,y)\ge \kappa \textup{ for all }x,y\in V.$$
However, restricting oneself only to short scale curvature could lead to some contradiction to the nature of particular graphs, e.g. the hexagonal tiling as illustrated in the following subsection. Later, we then discuss about some global implication of curvature signs.

\subsection{The hexagonal tiling} \label{subs: hexagon}
Let $G=(V,E)$ be a graph of the hexagonal tiling (which maybe either infinite tessellation, or finite tessellation, e.g. on a torus $T^2$).

Consider a pair of points $(x,y)$ with distance $d(x,y)=7.$ There are 4 non-equivalent positions of $y$ relative to $x$ listed as $y_1, y_2,y_3,y_4$ as shown in Figure \ref{fig:hexagon_tiling}.

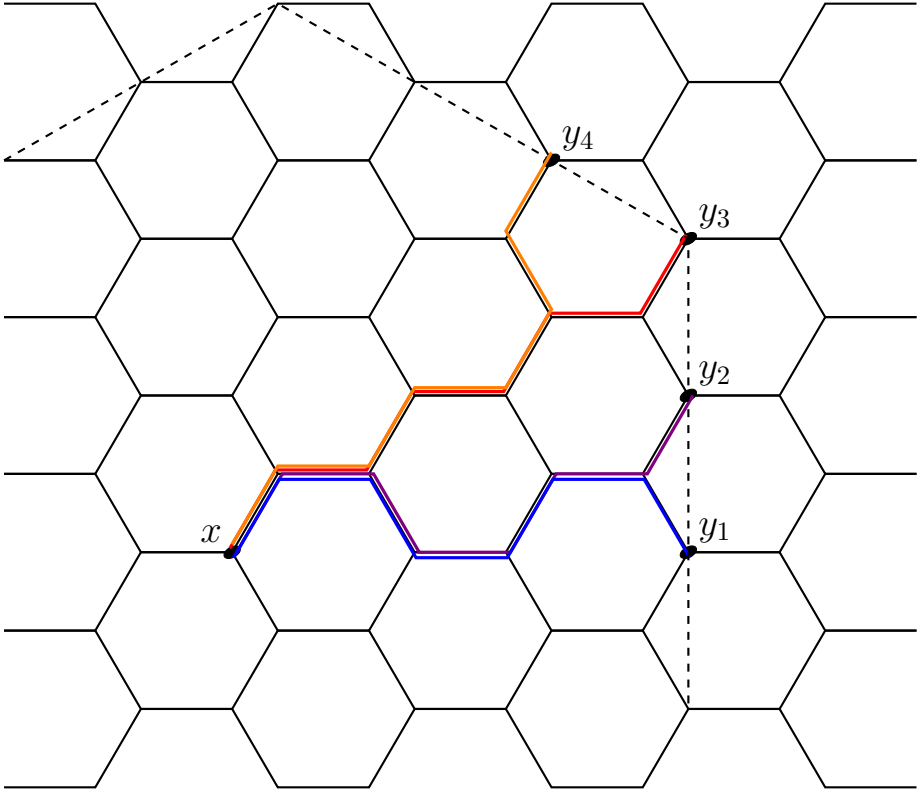
\begin{figure}[h!]
	\begin{tikzpicture}[scale=1.2]
	\pgftransformcm{1}{0}{0.500}{0.866}{\pgfpoint{0}{0}}
	\foreach \x in {0,1,2,3,4} {
		\drawHEX{-1*\x, 2*\x};
	}
	\drawlabelnode{-2,3}{above left}{$x$};
	\drawlabelnode{1,7}{above right}{$y_3$};
	\drawlabelnode{2,5}{above right}{$y_2$};
	\drawlabelnode{3,3}{above right}{$y_1$};
	\drawlabelnode{-1,8}{above right}{$y_4$};
	
	\draw[red,very thick,shift={(-0.05,0.05)}] 
	(-2,3)--(-2,4)--(-1,4)--(-1,5)--(0,5)--(0,6)--(1,6)--(1,7);
	
	\draw[violet,very thick,shift={(0.05,0)}] 
	(-2,3)--(-2,4)--(-1,4)--(0,3)--(1,3)--(1,4)--(2,4)--(2,5);
	
	\draw[blue,very thick,shift={(0.05,-0.07)}] 
	(-2,3)--(-2,4)--(-1,4)--(0,3)--(1,3)--(1,4)--(2,4)--(3,3);
	
	\draw[dashed,thick,shift={(0,0)}] 
	(-7,8)--(-5,10)--(1,7)--(4,1);
	
	\draw[orange,very thick,shift={(-0.05,0.1)}] 
	(-2,3)--(-2,4)--(-1,4)--(-1,5)--(0,5)--(0,6)--(-1,7)--(-1,8);	
	
	\end{tikzpicture}
	\caption{The dashed lines represent the locus of points with distance $7$ from $x$ in the hexagonal tiling.}
	\label{fig:hexagon_tiling}
\end{figure}

The following proposition gives the formula of short scale and long scale curvature (of distance 7) of the hexagonal tiling. The proof is omitted.
\begin{proposition} \label{prop: hexagon_curvature}
	Let $G=(V,E)$ be a graph of the hexagonal tiling. Let $p\in [0,1]$ be an idleness parameter. Then
	\begin{itemize}
		\item[\textup(i)] for any $w,z\in V$ such that $w\sim z$, the curvature is given by $$\kappa_p(w,z)=-\frac{2}{3}(1-p)<0.$$
		
		\item[\textup(ii)] for $x,y\in V$ such that $d(x,y)=7$, the (long scale) curvature is given by $$\kappa_p(x,y_1)=\kappa_p(x,y_2)=\frac{2}{21}(1-p)>0$$ and
		$$\kappa_p(x,y_3)=\kappa_p(x,y_4)=-\frac{2}{21}(1-p)<0.$$
	\end{itemize}
\end{proposition}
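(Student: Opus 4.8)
\textbf{Proof proposal for Proposition \ref{prop: hexagon_curvature}.}

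The plan is to treat parts (i) and (ii) separately, in both cases exploiting the structure of the idleness function established in Theorem \ref{thm:3linear} together with the fact that for this $3$-regular graph everything is forced by a small amount of local data. For part (i), since the hexagonal tiling is $3$-regular and vertex/edge transitive, I would fix an edge $w\sim z$ and compute $W_1(\mu_w^p,\mu_z^p)$ directly. Because the short-scale idleness function is concave, piecewise linear with at most $3$ linear parts, linear on $[0,\tfrac{1}{\mathrm{lcm}(d_w,d_z)+1}]=[0,\tfrac14]$ and on $[\tfrac{1}{\max(d_w,d_z)+1},1]=[\tfrac14,1]$ (the BCLMP theorem quoted in the introduction), it is in fact linear on all of $[0,1]$; hence it suffices to compute $\kappa_0(w,z)$ by exhibiting an optimal transport plan and a matching Kantorovich potential. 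One checks that the obvious plan (moving the two ``far'' neighbours of $w$ across to the two ``far'' neighbours of $z$ at cost $3$ each, keeping the common neighbour fixed) has cost $W_1(\mu_w^0,\mu_z^0)=\tfrac{1+3+3}{3}-\text{(corrections)}$; more carefully, the girth-$6$ structure forces $W_1(\mu_w^0,\mu_z^0)=\tfrac{5}{3}$, giving $\kappa_0=1-\tfrac53=-\tfrac23$, and then $\kappa_p=-\tfrac23(1-p)$ by linearity.

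For part (ii), I would again use Theorem \ref{thm:3linear}: the long-scale idleness function $p\mapsto\kappa_p(x,y)$ is concave, piecewise linear with at most $3$ pieces, linear on $[0,\tfrac{1}{\mathrm{lcm}(3,3)+1}]=[0,\tfrac14]$ and, by Remark \ref{remark:critpts} with $D=3$, the last piece starts no later than $\tfrac{D-2}{2D-2}=\tfrac14$. So the function is linear on $[0,1]$, and it suffices to compute $\kappa_0(x,y_i)$ for each of the four positions $y_1,y_2,y_3,y_4$ with $d(x,y_i)=7$. For each position I would (a) produce an explicit transport plan $\pi_i$ from $\mu_x^0$ to $\mu_{y_i}^0$ realising the claimed cost, and (b) produce an integer-valued Kantorovich potential $\phi_i$ (which exists by Proposition \ref{prop:int_valuedness}) with $\phi_i(x)-\phi_i(y_i)\in\{5,6,7\}$ (Lemma \ref{lemma:d-2} with $\delta=7$), and verify $\sum\phi_i(w)(\mu_x^0(w)-\mu_{y_i}^0(w))$ equals the cost of $\pi_i$; this pins down $W_1$ exactly. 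The claimed values translate to $W_1(\mu_x^0,\mu_{y_1}^0)=W_1(\mu_x^0,\mu_{y_2}^0)=7-\tfrac{14}{21}=\tfrac{133}{21}$ and $W_1(\mu_x^0,\mu_{y_3}^0)=W_1(\mu_x^0,\mu_{y_4}^0)=7+\tfrac{14}{21}=\tfrac{161}{21}$. The geodesics drawn in Figure \ref{fig:hexagon_tiling} already indicate, for $y_1$ and $y_2$, that there is enough ``room'' (multiple short routes between neighbourhoods) to transport mass cheaply, whereas for $y_3,y_4$ the neighbourhood of $y_i$ sits on the concave side of the distance-$7$ locus and transport is forced to be more expensive.

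The main obstacle I anticipate is the bookkeeping in part (ii): the balls $B_1(x)$ and $B_1(y_i)$ each have $4$ vertices, and writing down an optimal plan requires understanding, for each of the four non-equivalent positions, exactly which neighbour of $x$ is at distance $6,7,8$ from which neighbour of $y_i$ — this depends delicately on the orientation of the two hexagonal ``tripods'' relative to the line joining $x$ and $y_i$. The cleanest way to control this, and to be sure the plan is optimal rather than merely feasible, is to guess the integer potential $\phi_i$ first (it will essentially be a truncated, appropriately rounded version of $w\mapsto d(w,x)$ adjusted near $y_i$), check it is $1$-Lipschitz on the relevant finite patch of the tiling, read off from complementary slackness which pairs $(w,z)$ the plan is allowed to use, and then verify a plan supported on those pairs exists with the right marginals. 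Once the two values $\kappa_0(x,y_i)$ are computed for $i\in\{1,2\}$ and $i\in\{3,4\}$, the full formula for all $p\in[0,1]$ follows immediately from the linearity established above, and the sign statements are then trivial. (This is why the authors feel justified in omitting the proof: it is entirely a finite, if slightly tedious, verification.)
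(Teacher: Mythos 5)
The paper explicitly omits the proof of this proposition, so there is no argument of the authors' to compare yours against; I can only judge the proposal on its own terms. Your overall strategy --- use the structure theorems to reduce to finitely many values of $p$, then pin down $W_1$ at those values by exhibiting a transport plan together with a matching integer-valued Kantorovich potential (Proposition \ref{prop:int_valuedness}, complementary slackness) --- is the right one, and your numerical targets are correct: $W_1(\mu_w^0,\mu_z^0)=\tfrac{5}{3}$ in (i), and $W_1(\mu_x^0,\mu_{y_i}^0)=\tfrac{133}{21}$ for $i\in\{1,2\}$, $\tfrac{161}{21}$ for $i\in\{3,4\}$ in (ii).

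There is, however, a genuine gap in the reduction step, and it occurs in both parts. The quoted results only give linearity of $p\mapsto\kappa_p$ on $[0,\tfrac14]$ and on $[\tfrac14,1]$: in the short scale, $\mathrm{lcm}(3,3)+1=\max(3,3)+1=4$; in the long scale, Remark \ref{remark:firstcrit} gives the first piece up to $\tfrac14$, and Remark \ref{remark:critpts} with $D=3$ forces any critical point to equal $\tfrac{D-2}{2D-2}=\tfrac14$. These two intervals meet only at the single point $p=\tfrac14$, so a kink there is \emph{not} excluded, and your inference ``hence linear on all of $[0,1]$, so it suffices to compute $\kappa_0$'' does not follow. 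This is not a vacuous worry: the family in Section \ref{sect: example} with $D=3$, $k=0$ produces $3$-regular examples whose idleness function has a critical point exactly at $\tfrac14$. To close the gap you need one further computation per pair, e.g.\ compute $\kappa_{LLY}$ (equivalently $\kappa_p$ for some $p\ge\tfrac12$, where Lemma \ref{lemma:d-2} and Remark \ref{remark: last half linear} pin the potential to $\phi(x)-\phi(y)=\delta$ and the computation is easy) and check it agrees with $\kappa_0$; equivalently, in the notation of Section \ref{sect: critical points}, verify $c_{\delta-2}\le c_{\delta-1}\le c_{\delta}$, so that $p_1,p_2\le 0$ and no critical point lies in $(0,1)$. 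A minor secondary slip in (i): adjacent vertices of the hexagonal tiling have no common neighbour (the girth is $6$), so the plan you describe is not well defined and the cost $\tfrac{1+3+3}{3}=\tfrac73$ you start from is suboptimal; the optimal plan sends one neighbour of $w$ to $z$'s position and $z$'s mass onward to a neighbour of $z$ (cost $1$ each) and only one pair across at distance $3$, giving $\tfrac{1+1+3}{3}=\tfrac53$. The remainder of part (ii) is, as you say, finite bookkeeping over the four positions, and your potential-first method for certifying optimality is the correct way to organise it.
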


In particular, consider a finite hexagonal tessellation on a torus $T^2$, where the space is expected to have both positive and negative curvature. However, in short scale curvature, only the negative sign is presented, which suggests that the long scale curvature is more suitable to describe this space.

\subsection{Global results}

\begin{theorem}[non-positive curvature] \label{thm: neg_curvature}
	Let $G=(V,E)$ be locally finite graph and let $p\in[0,1)$. Assume that the curvature $\kappa_p(x,y)\le 0$ for all $x\not=y\in V$. Then $G$ must be infinite.
\end{theorem}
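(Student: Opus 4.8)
The plan is to argue by contradiction: I assume $G$ is finite (if $G$ is disconnected, replace it by one of its finite connected components) and then exhibit two vertices at which $\kappa_p$ is strictly positive. The natural candidate is a pair realizing the diameter, so I set $D:=\diam(G)=\max_{u,v\in V}d(u,v)$ and fix $x,y\in V$ with $d(x,y)=D$. When $D\le 1$ the graph $G$ is complete, there is no long-scale pair at all, and a direct computation gives $W_1(\mu_x^p,\mu_y^p)=\bigl|\,p-\tfrac{1-p}{d_x}\,\bigr|<1$ for every edge whenever $G$ has at least three vertices; so I may and do assume $D\ge 2$.

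The measures $\mu_x^p$ and $\mu_y^p$ are supported in $\{x\}\cup N(x)$ and $\{y\}\cup N(y)$ respectively, and in a graph of diameter $D$ any two vertices are at distance at most $D$. Hence $d(w,z)\le D$ whenever $\mu_x^p(w)>0$ and $\mu_y^p(z)>0$, which already yields $W_1(\mu_x^p,\mu_y^p)\le D$; the point of the proof is to upgrade this to a strict inequality. To that end, fix a geodesic $x=v_0\sim v_1\sim\cdots\sim v_D=y$. Then $v_1\in N(x)$ and $v_{D-1}\in N(y)$, so that $\mu_x^p(v_1)=\tfrac{1-p}{d_x}>0$ and $\mu_y^p(v_{D-1})=\tfrac{1-p}{d_y}>0$ (this is the only place where the hypothesis $p<1$ is used), while the triangle inequality together with the geodesic property forces $d(v_1,v_{D-1})=D-2$.

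Now I estimate the cost of the independent coupling $\pi(w,z):=\mu_x^p(w)\,\mu_y^p(z)\in\Pi(\mu_x^p,\mu_y^p)$. Since $\sum_{w,z}\pi(w,z)=1$ and each summand below is nonnegative,
\begin{align*}
W_1(\mu_x^p,\mu_y^p)&\le\sum_{w,z}\pi(w,z)\,d(w,z)\\
&=D-\sum_{w,z}\pi(w,z)\bigl(D-d(w,z)\bigr)\\
&\le D-2\,\mu_x^p(v_1)\,\mu_y^p(v_{D-1})=D-\frac{2(1-p)^2}{d_xd_y}<D,
\end{align*}
where in the last inequality I kept only the term indexed by $(v_1,v_{D-1})$, which equals $2\,\mu_x^p(v_1)\mu_y^p(v_{D-1})$ by the previous paragraph. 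Consequently $\kappa_p(x,y)=1-\tfrac1D W_1(\mu_x^p,\mu_y^p)>0$, contradicting the assumption that $\kappa_p(x,y)\le 0$. Hence $G$ cannot be finite.

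The argument is short once one focuses on the diameter pair, and its heart is simply that near the two ends of a geodesic there are vertices ($v_1$ and $v_{D-1}$) carrying positive mass and lying strictly closer together than $x$ and $y$ do. The only genuinely delicate point is the low-diameter degenerate cases $D\le 1$ — e.g. a single vertex, or a single edge at $p=0$ — where the statement is borderline and must be checked by hand; for $D\ge 2$ everything is routine. For $p\ge\tfrac12$ one could alternatively avoid the explicit coupling: Remark~\ref{remark: last half linear} gives $W_1(\mu_x^p,\mu_y^p)=pD+(1-p)c_D$, and the Lipschitz bounds $\phi(v_1)\le D-1$ and $\phi\ge 0$ on $V$, valid for every competitor $\phi$ in the definition of $c_D$, give $c_D\le D-\tfrac1{d_x}<D$, hence $\kappa_p(x,y)=(1-p)(1-c_D/D)>0$.
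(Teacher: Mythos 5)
Your argument is essentially the paper's own proof: take a pair $x,y$ realizing the diameter $D$, observe that the geodesic neighbours $v_1$ and $v_{D-1}$ carry positive mass (this is where $p<1$ enters) and sit at distance $D-2$, and conclude that $W_1(\mu_x^p,\mu_y^p)<D$ strictly, hence $\kappa_p(x,y)>0$; your only real additions are the explicit product coupling (the paper merely asserts that some plan with $\pi(v_1,v_{D-1})>0$ exists) and the resulting quantitative bound $W_1\le D-\tfrac{2(1-p)^2}{d_xd_y}$. Your treatment of $\diam(G)\le 1$ is in fact more careful than the paper's, whose dismissal of that case via ``complete graphs have positive curvature'' fails for $K_2$ at $p=0$ and vacuously for $K_1$: as you correctly flag, these degenerate graphs are genuine exceptions to the statement, which therefore implicitly assumes connectedness and $|V|\ge 3$.
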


\begin{proof}
	Suppose for the sake of contradiction that $G$ is finite, with the diameter $$\textup{diam}(G):=\sup\{d(w,z): w,z\in V\}=L<\infty.$$
	Note also that $L\le 2$, because $G$ cannot be a complete graph, which has positive curvature.
	Let $x$ and $y$ be antipodal vertices in $V$ (i.e. $d(x,y)=L$). Consider a geodesic from $x$ to $y$, namely $x=v_0\sim v_1 \sim...\sim v_{L-1}\sim v_L=y$. It follows that $v_1$ is a neighbour of $x$, and $v_{L-1}$ is a neighbour of $y$, and that $d(v_1,v_{L-1})=L-2$. Consider a transport plan $\pi\in \prod(\mu_x^p,\mu_y^p)$ such that $\pi(v_1,v_{L-1})>0$.
	
	Hence the $W_1(\mu_x^p,\mu_y^p)$ is bounded above by:
	\begin{align} \label{eqn: neg_curvature}
	W_1(\mu_x^p,\mu_y^p)\le \sum\limits_{w,z\in V} \pi(w,z)d(w,z) \le L\cdot\sum\limits_{w,z\in V} \pi(w,z)=L.
	\end{align}
	
	Moreover, $\pi(w,z)d(w,z) < L\pi(w,z)$ when $w=v_1$ and $z=v_{L-1}$, so the inequality in \eqref{eqn: neg_curvature} must be strict. That is $W_1(\mu_x^p,\mu_y^p)< L$ which then implies $$\kappa_p(x,y)=1-\frac{1}{L}W_1(\mu_x^p,\mu_y^p)>0$$ contradicting to the curvature assumption.
\end{proof}

\begin{example}
Let $G=(V,E)$ be the infinite $d$-regular tree (with $d\ge 0$). Let $x,y\in V$ such that $d(x,y)=L\ge 1$. Then for all $p\in [0,1]$, $$\kappa_p(x,y)=\frac{4-2d}{dL}(1-p).$$ The infinite regular trees illustrate a family of graphs which have non-positive curvature everywhere.
\end{example}

\begin{remark}
	The thoerem of discrete Bonnet-Myers \cite{LLY11,Oll} states that a graph $G=(V,E)$ with positive curvature bounded away from zero $\kappa_p(x,y)\ge K > 0$ for all $x\not=y\in V$ must be a finite graph. This assumption can be replaced by: $\kappa_p(x,y)\ge K > 0$ for all neighbours $x\sim y\in V$, since both assumptions are essentially equivalent. On the other hand, the assumption in Theorem \ref{thm: neg_curvature} cannot be reduced to: $\kappa_p(x,y)\le 0$ for all neighbours $x\sim y\in V$. As a counterexample, consider a graph $G$ of a finite hexagonal tessellation on a torus $T^2$ (see Subsection \ref{subs: hexagon}).
\end{remark}

There is another way to modify discrete Bonnet-Myers' theorem, by replacing the assumption condition with $\kappa_p(x,y)\ge \kappa> 0$ for a fixed vertex $x\in V$ and for all $y\in V$.

\begin{theorem}[modified discrete Bonnet-Myers] \label{thm: pos_curvature}
	Let $G=(V,E)$ be locally finite graph and let $p\in[0,1)$. Assume that there is a constant $\kappa>0$ and a fixed vertex $x\in V$ such that the curvature $\kappa_p(x,y)\ge \kappa$ for all $y\in V\backslash\{x\}$. Then $G$ must be finite.
\end{theorem}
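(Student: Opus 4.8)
The plan is to turn the curvature hypothesis into a bound on the eccentricity of $x$. Since $\kappa_p(x,y)=1-W_1(\mu_x^p,\mu_y^p)/d(x,y)$, the assumption is equivalent to
\[
W_1(\mu_x^p,\mu_y^p)\le (1-\kappa)\,d(x,y)\qquad\text{for all }y\in V\setminus\{x\}.
\]
In particular this presupposes $d(x,y)<\infty$ for every $y$, so $G$ is connected. The whole argument then reduces to producing a lower bound on $W_1(\mu_x^p,\mu_y^p)$ that grows linearly in $d(x,y)$; this is the mirror image of the explicit-plan computation used in Theorem \ref{thm: neg_curvature}, except that here we need \emph{every} plan to be expensive rather than \emph{some} plan to be cheap.

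First I would establish the elementary estimate $W_1(\mu_x^p,\mu_y^p)\ge d(x,y)-2$. This holds because $\mu_x^p$ is supported in the ball $B(x,1)$ and $\mu_y^p$ in $B(y,1)$: for any $\pi\in\Pi(\mu_x^p,\mu_y^p)$ and any pair $(w,z)$ with $\pi(w,z)>0$ we have $d(w,z)\ge d(x,y)-d(x,w)-d(y,z)\ge d(x,y)-2$, hence
\[
\sum_{w,z\in V}\pi(w,z)\,d(w,z)\ \ge\ (d(x,y)-2)\sum_{w,z\in V}\pi(w,z)\ =\ d(x,y)-2,
\]
and taking the infimum over $\pi$ gives the claim. (A slightly sharper version, $W_1(\mu_x^p,\mu_y^p)\ge d(x,y)-2(1-p)$, follows from the triangle inequality for $W_1$ together with $W_1(\delta_x,\mu_x^p)=W_1(\delta_y,\mu_y^p)=1-p$; either form suffices.) Combining this with the upper bound above yields, for every $y\in V\setminus\{x\}$,
\[
d(x,y)-2\ \le\ (1-\kappa)\,d(x,y),\qquad\text{i.e.}\qquad d(x,y)\ \le\ \frac{2}{\kappa}.
\]

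Finally I would conclude using local finiteness. Set $R:=\lfloor 2/\kappa\rfloor$; by the previous display $V=B(x,R)$. Since $G$ is locally finite, an easy induction on $r$ shows each sphere $S(x,r)$ is finite (it is contained in the union of the neighbourhoods of the finitely many vertices of $S(x,r-1)$), so $B(x,R)$ is finite, and therefore $V$, hence $G$, is finite. I do not expect a genuine obstacle here: the only point needing a little care is the support and idle-mass bookkeeping in the lower bound for $W_1$, which is why I would spell out the support argument explicitly rather than quote it. Note that the hypothesis $p<1$ is used only to keep the assumption $\kappa_p(x,y)\ge\kappa>0$ from being vacuous; the estimates themselves are valid for all $p\in[0,1]$.
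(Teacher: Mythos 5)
Your proof is correct and follows essentially the same route as the paper: both derive a bound $d(x,y)\le C/\kappa$ by playing the curvature hypothesis $W_1(\mu_x^p,\mu_y^p)\le(1-\kappa)\,d(x,y)$ against a lower bound of the form $d(x,y)-2$, the paper obtaining the slightly sharper $d(x,y)-2(1-p)$ via the $W_1$ triangle inequality through the Dirac measures $\delta_x,\delta_y$ (which you also note as an aside), while you use a direct support argument on transport plans. The weaker constant and your explicit local-finiteness bookkeeping at the end change nothing essential.
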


The proof is very similar to the one in the original discrete Bonnet-Myers \cite{Oll}, which employs the dirac-measure $\delta_x=\mathbbm{1}_x$.

\begin{proof}
Consider any $y\in V$. Let $L:=d(x,y)$. The condition $\kappa_p(x,y)\ge \kappa$ implies that $W_1(\mu_x^p,\mu_y^p) \le (1-\kappa)L$. 
Then 
\begin{align*}
L=W_1(\delta_x,\delta_y) &\le W_1(\delta_x,\mu_x^p)+W_1(\mu_x^p,\mu_y^p)+W_1(\mu_y^p,\delta_y)\\
&=W_1(\mu_x^p,\mu_y^p)+2(1-p)\\
&\le (1-\kappa)L+2(1-p)
\end{align*}
which gives $d(x,y)\le \frac{2(1-p)}{\kappa}$ for all $y\in V$. Hence, $d(y,z)\le d(x,y)+d(x,z)\le \frac{4(1-p)}{\kappa}$ for all $y,z\in V$.
\end{proof}

\begin{example}
Let $G=(V,E)$ be the graph as shown in Figure \ref{fig:example_BM}. The short scale Lin-Lu-Yau curvature of $G$ can be computed as:
\begin{align*}
\kappa_{LLY}(x,w)=1; \ \kappa_{LLY}(w,y)=-\frac{1}{3}; \
\kappa_{LLY}(y,z_1)=\kappa_{LLY}(y,z_2)=\frac{2}{3}. 
\end{align*}
Therefore $G$ does not satisfy the condition of the original discrete Bonnet-Myers' theorem. However, the curvature between $x$ and the other vertices are as follows:
\begin{align*}
\kappa_{LLY}(x,w)=1; \ \kappa_{LLY}(x,y)=\frac{1}{3}; \
\kappa_{LLY}(x,z_1)=\kappa_{LLY}(x,z_2)=\frac{2}{3}. 
\end{align*}
Thus $G$ does however satisfy the condition of Theorem \ref{thm: pos_curvature}.

\end{example}

\begin{figure}[h!]
	\centering
	\begin{tikzpicture}[scale=2]
	\drawlabelnode{-2,0}{above left}{$x$};
	\drawlabelnode{-1,0}{above left}{$w$};
	\drawlabelnode{0,0}{above left}{$y$};
	\drawlabelnode{0.5,0.866}{above right}{$z_1$};
	\drawlabelnode{0.5,-0.866}{above right}{$z_2$};

	\draw[black, very thick] (-2,0)--(-1,0)--(0,0)--(0.5,0.866);
	\draw[black, very thick] (0,0)--(0.5,-0.866);
	\end{tikzpicture}
	\caption{The graph $G$}
	\label{fig:example_BM}
\end{figure}
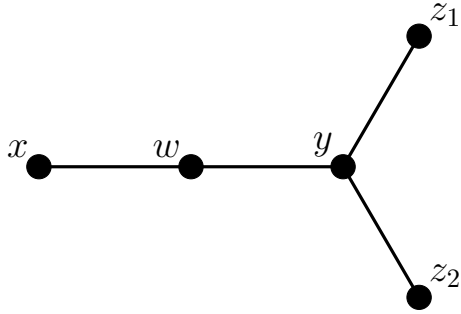

Lastly, we finish the paper by quoting an open problem on long scale curvature suggested by Ollivier in \cite{Ollopen}.

\begin{problem} (\cite[Problem C]{Ollopen})
Ricci curvature of $\mathbb{Z}^n$ is 0. What happens on discrete or continuous nilpotent groups?
For example, on the discrete Heisenberg group $$H_3(\mathbb{Z}):=\langle a,b,c \ \big| ac=ca,\ bc=cb,\ [a,b]=c\rangle,$$ the natural discrete random walk analogous to the hypoelliptic diffusion operator on the continuous Heisenberg group is the random walk generated by $a$ and $b$. Since these
generators are free up to length 8, clearly Ricci curvature is negative at small scales, but
does it tend to 0 at larger and larger scales?
\end{problem}

\end{document}